\newtheorem*{rep@theorem}{\rep@title}
\newcommand{\newreptheorem}[2]{%
\newenvironment{rep#1}[1]{%
\def\rep@title{#2 \ref{##1}}%
\begin{rep@theorem}}%
{\end{rep@theorem}}}
\newtheorem{theorem}{Theorem}[section]
\newtheorem{lemma}[theorem]{Lemma}
\newtheorem{corollary}[theorem]{Corollary}
\newtheorem{example}[theorem]{Example}
\DeclareMathOperator{\supp}{supp}
\DeclareMathOperator{\Aut}{Aut}
\DeclareMathOperator{\A}{Aut}
\DeclareMathOperator{\madf}{m.a.d.f.}
\DeclareMathOperator{\ad}{a.d.}
\newcommand{\Z}{{\mathbb Z}}
\newcommand{\N}{{\mathbb N}}
\newcommand{\R}{{\mathbb R}}
\newcommand{\Q}{{\mathbb Q}}
\newcommand{\B}{{\mathcal B}}
\newcommand{\C}{{\mathcal C}}
\newcommand{\PP}{{\mathcal P}}
\title{Pathological and Highly Transitive Representations of Free Groups}
\author{J. Bruno\\ National University of Ireland, Galway\\ j.bruno1@nuigalway.ie}
\begin{document}

\maketitle
\begin{abstract}

A well-known result by P. Cameron provides us with a construction of the free group of rank $2^{\aleph_0}$ within the automorphism group of the rationals. We show that the full versatility of doubly transitive automorphism groups is not necessary by extending Cameron's construction to a larger class of permutation groups and generalize his result by constructing \emph{pathological} (permutations of unbounded support) and \emph{$\omega$-transitive} (highly transitive) representations of free groups. In particular, and working solely within ZFC, we show that any \emph{large} subgroup of $\Aut(\Q)$ (resp. $\Aut(\R)$) contains an $\omega$-transitive and pathological representation of any free group of rank $\lambda \in [\aleph_0 ,2^{\aleph_0}]$ (resp. of rank $2^{\aleph_0}$). Assuming the continuum to be a regular cardinal, we show that pathological and $\omega$-transitive representations of uncountable free groups abound within large permutation groups of linear orders. Lastly, we also find a bound on the rank of free subgroups of certain restricted direct products.

\end{abstract}
 \section*{Introduction}

 The study of automorphism groups of linear orders effectively began in 1957 with a publication by P. M. Cohn \cite{MR0091280} where he solved a question posed by B. H. Neumann: \textit{Can the automorphism group, $\A(\Omega)$, of any linear order, $\Omega$, be ordered?} An \textit{ordering} on $\A(\Omega)$ (\cite{MR2796249}, \cite{MR0158009}) is meant to convey a partial order $(\A(\Omega), <)$ so that the order is invariant under actions of $\A(\Omega)$ onto itself. Cohn answered this question negatively and gave sufficient and necessary conditions on $\Omega$ for rendering an order on $\A(\Omega)$ and any \textit{large} (a doubly transitive $l$-group closed under piecewise patching and disjoint patching, \cite{MR988099}) subgroups thereof. 
 Under the assumption of the Generalized Continuum Hypothesis (GCH), Glass (\cite{MR0409309}) showed that the free $l$-group $F_{\aleph_\alpha}$ of rank $\aleph_\alpha$ can be represented as an $o$-2 transitive $l$-subgroup of the automorphism group of an $\alpha$-{set}. Over a decade later McCleary (\cite{MR787955}) crystalized further group theoretic properties of free $l$-groups by extending the above result to any free $l$-group. McCleary's remarkable result was achieved by exploiting, as he calls it,  \emph{the best of both worlds}; a right ordering $(G_\kappa, \leq)$ (i.e. a right ordering on the free group of rank $\kappa$) for which the natural action of  $F_\kappa$ (in the sense of Conrad \cite{MR0270992}) is faithful and $o$-2 transitive. A comprehensive retrospective survey by Bludov, Droste and Glass can be found in \cite{MR2796250}. Working in the other direction (i.e. when do permutation groups of linear orders accept free subgroups), Cameron (\cite{Cameron99oligomorphicpermutation}) constructs a copy of the free group of rank $2^{\aleph_0}$ within $\Aut(\mathbb{Q})$. Moreover, it is possible to show that any doubly transitive automorphism group of a linear order (one which acts transitively on ordered pairs) of a linear order must contain a copy of $\Aut(\mathbb{Q})$.
 In this paper we show that the full versatility of doubly transitive automorphism groups is not necessary by constructing \emph{pathological} (permutations of unbounded support) and \emph{$\omega$-transitive} (transitive on all ordered $n$-tuples) representations of free groups within several classes of doubly transitive permutation groups of linear orders. In particular, and as a consequence of our main result, we greatly generalize Cameron's construction by proving that any \emph{large} subgroup of $\Aut(\Q)$ (resp. $\Aut(\R)$) contains an $\omega$-transitive and pathological representation of any free group of rank $\lambda \in [\aleph_0 ,2^{\aleph_0}]$ (resp. of rank $2^{\aleph_0}$).

 \section*{Background}
 The symbols $\kappa$ and $\lambda$ will always denote infinite cardinals and $\mathfrak{c} = 2^{\aleph_0}$. For any $\kappa$, $\lambda = \kappa^{+n}$ denotes the n$^{th}$ successor of $\kappa$ and  $[\kappa]^{<\omega}$ denotes the collection of all of its finite subsets. A function $f: \kappa \rightarrow \lambda$ is \emph{cofinal} in $\lambda$ provided its range ($ran(f)$) knows no bounds in $\lambda$. The \emph{cofinality} of any $\lambda$ (cof($\lambda$)) is the smallest $\kappa$ so that there exists a function that cofinally maps $\kappa$ into $\lambda$. A cardinal is \emph{regular} if it is its own cofinality and singular otherwise. Since it is consistent with ZFC that $\mathfrak{c} = \aleph_1$ and that $\mathfrak{c} = \aleph_{\omega_1}$, regularity of $\mathfrak{c}$ is then independent of ZFC. If $x,y \subset \kappa$ so that $|x| = |y| = \kappa$ and $|x \cap y| < \kappa$ then $x$ and $y$ are said to be \emph{almost disjoint} ($\ad$). An $\ad$ \emph{family} is a collection of pairwise $\ad$ sets and any such family is \emph{maximal} ($\madf$) whenever it is not contained in any other $\ad$ family.

Standard definitions and notation regarding permutation groups (i.e. \emph{$l$-group, faithful representation, support}, etc) can be found in \cite{MR2796250}. In order to avoid confusion, we remark that throughout this paper all $k$-tuples are assumed to be ordered and the symbols $P$ and $\Omega = (\Omega, \leq)$ will represent a permutation group and a linear order, respectively. A \textit{k-transitive} permutation group $P$ on an arbitrary $\Omega$ is one for which given any pair of $k$-tuples $(a_1, a_2, \ldots, a_k),(b_1, b_2, \ldots, b_k)\in \Omega^k$ we can find an $f \in P$ for which $(f(a_1), f(a_2), \ldots, f(a_k))= (b_1, b_2, \ldots, b_k)$. Moreover, if $P$ is $k$-transitive for all $k \in \N$ then we refer to it as an \emph{$\omega$-transitive} permutation group. A $2$-transitive permutation group will be called \emph{doubly transitive}. Notice that if $P$ is doubly transitive and $\Omega$ contains more than two points then $\Omega$ must be a dense linear order (DLO) and have no end points. A faithful representation (in the sense of \cite{MR787955}) of a group $G$ within a permutation group will be denoted by $\hat{G}$. We adopt the traditional use of $G_\kappa$ to denote the free group of rank $\kappa$. For a $g \in P$, $\supp(g)$ will denote its support and $P$ will be referred to as \emph{pathological} provided it contains no element ($\not = e$) of bounded support.  

Recall that any $P \leq \Aut(\Omega)$ is said to be closed under \emph{piecewise patching} precisely when given any convex subset $S$ of $\Omega$ and coterminal (unbounded above and below) sequences $\{a_i \mid i \in \Z\}$ and $\{b_i \mid i \in \Z\}$ in $S$ with $a_i < a_{i+1}$ and $b_i < b_{i+1}$ so that $\forall i \in \Z$ there exists $g_i \in P $ for which $g_i([a_i, a_{i+1}]) = [b_i, b_{i+1}]$ then $P$ also contains an element $g$ that acts as the identity outside $S$ and $g(x) = g_i(x)$ provided $x \in [a_i, a_{i+1}]$. Similarly, P is said to be \emph{closed under disjoint patching} if for all $i \in I$, $g_i \in P$ and $\supp(g_i) \cap \supp(g_j) = \emptyset$ (for $j \not = i$) then $\exists g \in P$ so that

\begin{equation*}
g(x) =
\begin{cases}

g_i(x) & \text{if } x \in \supp(g_i)\mbox{ and}\\
x & \text{if } \mbox{ otherwise.}\\
\end{cases}
\end{equation*}\\

Any doubly transitive $l$-subgroup of $\Aut(\Omega)$ closed under piecewise patching and disjoint patching is said to be \emph{large in $\Aut(\Omega)$} (or just \emph{large}). The letter $H$ will be used to refer to large permutation groups exclusively. We adopt the traditional use of the restriction symbol `$\upharpoonright$'. That is, for $\Lambda \subseteq \Omega$ and any $g \in \Aut(\Omega)$ then $g \upharpoonright \Lambda$ (if it exists) is the unique element in $\Aut(\Lambda)$ that acts like $g$ on $\Lambda$. Lastly, for any $P$ and $\Lambda \subseteq \Omega$ we define $\Aut_P(\Lambda) := \{ g\in \Aut(\Lambda) \mid \exists$  $\overline{g} \in P$ so that $\overline{g} \upharpoonright \Lambda = g\}$ Notice that $\Aut_P(\Lambda)$ makes no reference to the ambient linear order $\Omega$, hence this symbol will only be used when it is clear what $\Omega$ is, leaving no room for confusion.

Our method for constructing representations of free groups is supported by two keystones: one is Cameron's representation of $G_\mathfrak{c}$ within $\Aut(\R)$ while the other is a consequence of the following lemma (\cite{pingpong}).

 \begin{lemma}[Ping-Pong Lemma] Let $G$ be a group acting on a set $X$. Suppose that $\{A_1, A_2, \ldots, A_n, B_1, B_2, \ldots, B_n\}$ is a set of pairwise disjoint subsets of $X$ so that for \\$f_1, f_2, \ldots, f_n \in G$ we have:

 $$ B_i^c \subseteq f_i(A_i),$$

then the group generated by $\{f_1, f_2, \ldots, f_n\}$ is free.

 \end{lemma}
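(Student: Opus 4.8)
The plan is to reduce the stated hypothesis to the classical \emph{attracting/repelling} form of ping-pong and then run the standard reduced-word tracking argument. First I would note that every $f_i$ acts as a bijection of $X$, so the single inclusion $B_i^c \subseteq f_i(A_i)$ can be rephrased in two equivalent ways: applying $f_i^{-1}$ gives $f_i^{-1}(X\setminus B_i)\subseteq A_i$, while taking complements (and using that $f_i$ is a bijection, so $f_i(X\setminus A_i)=X\setminus f_i(A_i)$) gives $f_i(X\setminus A_i)\subseteq B_i$. Thus each $f_i$ pushes the complement of $A_i$ into $B_i$ and each $f_i^{-1}$ pushes the complement of $B_i$ into $A_i$, with all $2n$ sets $A_1,\dots,A_n,B_1,\dots,B_n$ pairwise disjoint.

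To each signed generator I would attach a \emph{target} and an \emph{exclusion} set: the letter $f_i$ has target $B_i$ and exclusion $A_i$, while $f_i^{-1}$ has target $A_i$ and exclusion $B_i$; by the reformulation above, a letter $s$ maps everything outside its exclusion set into its target set. The combinatorial heart of the argument is the observation that in a \emph{reduced} word the sets interlock correctly: if $s_{j+1}s_j$ is a reduced length-two subword (so $s_{j+1}\neq s_j^{-1}$), then the target of $s_j$ is disjoint from the exclusion of $s_{j+1}$. This is verified by a short case analysis: if $s_j$ and $s_{j+1}$ involve different generators the four relevant sets are distinct members of the disjoint family, and if they involve the same generator then $s_{j+1}\neq s_j^{-1}$ forces the target and exclusion to be the complementary pair $A_i,B_i$, which are disjoint.

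Given this, I would take an arbitrary nonempty reduced word $w=s_m s_{m-1}\cdots s_1$ together with a base point $x_0$ lying outside all $2n$ sets (in particular outside the exclusion set of $s_1$), and prove by induction on $k$ that $s_k\cdots s_1(x_0)$ lands in the target of $s_k$. The base case is immediate since $x_0$ avoids the exclusion of $s_1$; the inductive step applies $s_{k+1}$ to the previous partial image, which lies in the target of $s_k$ and hence outside the exclusion of $s_{k+1}$ by the interlocking property, so the next image lands in the target of $s_{k+1}$. Consequently $w(x_0)$ lies in the target of $s_m$, which cannot contain $x_0$ since $x_0$ lies outside every set; hence $w(x_0)\neq x_0$ and $w\neq e$. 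As this holds for every nonempty reduced word, $\{f_1,\dots,f_n\}$ is a free basis and $\langle f_1,\dots,f_n\rangle$ is free.

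The step I expect to be the main obstacle is not the induction itself but pinning down the interlocking observation together with the choice of base point: it is precisely here that both the \emph{disjointness} of the $2n$ sets and the \emph{reducedness} of the word are indispensable, and one must secure a legitimate base point (a point avoiding all the sets, equivalently ruling out the degenerate situation in which the sets cover $X$ and a generator could act trivially) so that the final comparison $w(x_0)\neq x_0$ genuinely goes through.
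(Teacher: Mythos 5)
The paper gives no proof of this lemma (it is quoted from \cite{pingpong}), so your argument has to stand on its own. Its skeleton is the standard one and is executed correctly: the two reformulations $f_i(X\setminus A_i)\subseteq B_i$ and $f_i^{-1}(X\setminus B_i)\subseteq A_i$, the target/exclusion bookkeeping, the interlocking property for reduced two-letter subwords, and the induction along a reduced word are all fine. The genuine gap is the issue you flag in your last paragraph and then leave unresolved: your proof requires a base point $x_0$ outside all $2n$ sets, and the hypotheses of the lemma do not provide one. Nor can this be dismissed as a degenerate case to be ``ruled out'': in this paper's own applications the sets always cover $X$. In Example~\ref{exa:example} the four sets $A_1=N_0$, $B_1=N_3$, $A_2=N_2$, $B_2=N_1$ partition $\R$, and in Lemma~\ref{lem:interval} the families $I_0,I_1,I_2,I_3$ cover the convex hull $C_\Gamma$, which is exactly the set on which $f$ and $g$ act. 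So what you have proved is the lemma under the additional hypothesis $\bigcup_i (A_i\cup B_i)\neq X$, and that version is too weak for every use the paper makes of it. (Your suggestion that a covering family entails a generator acting trivially is also refuted by Example~\ref{exa:example}: there the sets cover $\R$ and the generators are nevertheless a free basis.)

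The standard repair keeps your interlocking lemma and induction but tracks a set instead of a point. Given a nonempty reduced word $w=s_m\cdots s_1$, let $E_1$ be the exclusion set of $s_1$ and $T_m$ the target set of $s_m$, and choose $Y$ among the $2n$ sets with $Y\neq E_1$ and $Y\neq T_m$; since $n\geq 2$ there are at least four sets, so such a $Y$ exists. By pairwise disjointness $Y\cap E_1=\emptyset$, so your induction applied to $Y$ in place of $x_0$ gives $w(Y)\subseteq T_m$; as $Y$ is nonempty and disjoint from $T_m$, this forces $w(Y)\neq Y$, hence $w\neq e$. (Here one reads the statement, as the paper's applications do, as asserting that all $2n$ sets are nonempty and pairwise disjoint; with empty sets the lemma is literally false, e.g.\ $A_i=\emptyset$, $B_i=X$, $f_i=e$ satisfies the hypotheses.) Note also that the restriction to $n\geq 2$ is essential for this fix, and indeed for the conclusion that the $f_i$ form a free basis: for $n=1$ take $X=\{1,2\}$, $A_1=\{1\}$, $B_1=\{2\}$ and $f_1$ the identity; the hypothesis $B_1^c\subseteq f_1(A_1)$ holds, yet $f_1$ generates the trivial group. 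So your instinct that an extra nondegeneracy condition is needed is right in rank one, but in the rank-two case the paper actually uses, the set-tracking argument closes the gap without any hypothesis beyond those stated.
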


\section{The Countable Pathological Case}

We begin by illustrating a simple representation of  $G_2$ within $\Aut(\R$) for it can be easily extended to any large permutation group of a linear order\footnote{There are several constructions of free groups within $\Aut(\R)$. Some are explicit constructions \cite{MR1363412} while others involve more difficult algebraic notions \cite{MR969681}.}.

 \begin{example} \label{exa:example} Let $N_n = \{[a, a+1) \mid a \equiv n$ $(mod$ $4)\}$ ($n = 0,1,2,3$) and define $f \in \Aut(\R)$ so that $f[a,a+1) = [a, a+\frac{13}{4}), [a+\frac{9}{4}, a+\frac{10}{4}), [a + \frac{6}{4}, a + \frac{7}{4})$ and $[a+ \frac{3}{4}, a + 1)$ when $[a, a+1) \in N_0, N_1, N_2$ and $N_3$ respectively. Since $f$ is periodic with a period of $4$ we can understand how $f$ behaves on $\R$ by its action on the interval $[0,4]$.
  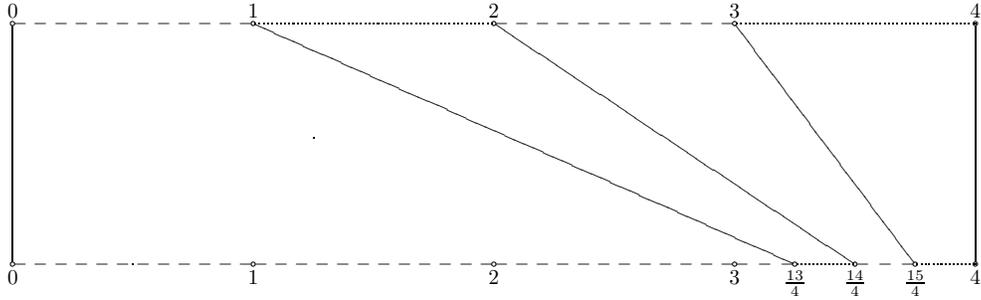
\begin{figure}[H]
\begin{center}
\scalebox{0.8}{$\begin{xy}
 \POS
(-40,10) *\cir<1pt>{} ="0" *+!D{0},
(0,10) *\cir<1pt>{} ="1" *+!D{1},
(40,10) *\cir<1pt>{} ="2" *+!D{2},
(80,10) *\cir<1pt>{} ="3" *+!D{3},
(120,10) *\cir<1pt>{} ="4" *+!D{4},
(120,10) *\cir<0pt>{} ="0R",
(-20,-30) *\cir<0pt>{} ="0DL",
(10,-9) *\cir<0pt>{} ="a'",
(-40,-30) *\cir<1pt>{} ="a" *+!U{0},
(0,-30) *\cir<1pt>{} ="b" *+!U{1},
(40,-30) *\cir<1pt>{} ="c" *+!U{2},
(80,-30) *\cir<1pt>{} ="d" *+!U{3},
(110,-30) *\cir<1pt>{} ="15/4" *+!U{\frac{15}{4}},
(100,-30) *\cir<1pt>{} ="14/4" *+!U{\frac{14}{4}},
(90,-30) *\cir<1pt>{} ="13/4" *+!U{\frac{13}{4}},
(120,-30) *\cir<1pt>{} ="e" *+!U{4},
(120,-30) *\cir<0pt>{} ="0DR",

\POS "0" \ar@{--}  "1",
\POS "1" \ar@{..}  "2",
\POS "2" \ar@{--}  "3",
\POS "3" \ar@{..}  "4",
\POS "4" \ar@{..}  "0R",

\POS "a" \ar@{--}  "b",
\POS "b" \ar@{--}  "c",
\POS "c" \ar@{--}  "d",
\POS "13/4" \ar@{..}  "14/4",
\POS "14/4" \ar@{--}  "15/4",
\POS "15/4" \ar@{..}  "e",
\POS "d" \ar@{--}  "13/4",
\POS "e" \ar@{..}  "0DR",

\POS "0" \ar@{-}  "a",
\POS "2" \ar@{-}  "14/4",
\POS "3" \ar@{-}  "15/4",
\POS "1" \ar@{-}  "13/4",
\POS "4" \ar@{-}  "e",

 \end{xy}$}
\caption{The interval $[0,4]$ under $f$.}
\end{center}
\end{figure}

 As for $g \in \Aut(\R)$ we let $g(x) = f(x-2)+2$. It is then possible to apply the Ping-Pong Lemma to this example by letting $A_1 = N_0$, $B_1 = N_3$, $A_2 = N_2$ and $B_2 = N_1$. In turn, $\{f,g\}$ generates a free subgroup of $\Aut(\R)$.
 \end{example}

Notice that the above example makes no use of the algebraic properties of $\R$; this is essential since our goal is to extend the above example to linear orders in general. In the sequel we use $G_\omega$ to denote $G_{\aleph_0}$ and use the symbol $\Omega$ exclusively to denote a linear order.

\begin{lemma} \label{lem:interval} For any doubly transitive $P \leq \Aut(\Omega)$ closed under piecewise patching and any interval $\Lambda \subset \Omega$ there exists a representation $\hat{G}_\eta$ of $G_\eta$ ($1 < \eta \leq \aleph_0$) in $\Aut_P(\Lambda)$ so that any element in $\hat{G}_\eta$ can be trivially extended to an element in $P$. That is, given any $g \in \hat{G}_\eta$ we can find a $\overline{g} \in P$ so that $\overline{g} \upharpoonright \Lambda = g$ and the identity otherwise.
\end{lemma}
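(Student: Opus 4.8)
The plan is to transport the combinatorics of Example~\ref{exa:example} from $\R$ with its integer grid to an arbitrary doubly transitive $\Omega$, replacing the affine maps used there by automorphisms supplied by double transitivity and gluing them together with piecewise patching. Since $P$ is doubly transitive and $\Omega$ has more than two points, $\Omega$ is a dense linear order without endpoints and the interval $\Lambda$ is densely ordered. The first step is to manufacture, inside $\Lambda$, a convex set carrying a $\Z$-indexed grid on which patching can operate. Choosing a strictly increasing and a strictly decreasing sequence out of a point of $\Lambda$ (possible by density), I let $\{c_i \mid i \in \Z\}$ be the resulting increasing $\Z$-sequence and set $S := \{x \in \Lambda : c_i < x < c_j \text{ for some } i,j\}$. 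Then $S$ is a convex subset of $\Omega$ in which $\{c_i\}$ is coterminal, and it is partitioned by the half-open intervals $I_i := [c_i, c_{i+1})$. Crucially, everything will be built with support inside $S$: any $g \in P$ that is the identity off $S$ is in particular the identity off $\Lambda$, which is exactly the \emph{trivial extension} demanded by the statement, and the class of such elements is closed under the group operations.

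Next I set up the ping-pong sets as unions of grid intervals along partition classes. For finite $\eta = n$ I partition $\Z$ into $2n$ coterminal classes by congruence modulo $2n$ and put $A_j := \bigcup_{i \equiv 2j} I_i$ and $B_j := \bigcup_{i \equiv 2j+1} I_i$; these $2n$ sets are pairwise disjoint and each is coterminal in $S$. For each generator I then construct $f_j \in P$ exactly as $f$ was built in the example: choosing a second coterminal target grid $\{d_i^{(j)}\}$ of $S$ (which exists by density) arranged in blocks so that the $A_j$-intervals are stretched and the remaining intervals are compressed, double transitivity yields, for every $i$, an element of $P$ carrying $[c_i,c_{i+1}]$ onto $[d_i^{(j)}, d_{i+1}^{(j)}]$, and piecewise patching assembles these into a single $f_j \in P$ that is the identity off $S$ and sends the $c$-grid to the $d^{(j)}$-grid. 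The target grid is chosen so that the images of the intervals constituting $A_j$ tile all of $S$ except the intervals constituting $B_j$; that is, $B_j^c \subseteq f_j(A_j)$, precisely the hypothesis of the Ping-Pong Lemma. Applying that lemma with $X = S$ shows $\langle f_1, \ldots, f_n\rangle$ is free of rank $n$, and since each $f_j$ (and hence each reduced word in them) is the identity off $S \subseteq \Lambda$, restricting to $\Lambda$ gives a faithful copy $\hat G_n \leq \Aut_P(\Lambda)$ whose elements extend trivially to $P$.

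For $\eta = \aleph_0$ the same scheme runs once I replace the finitely many congruence classes by a countable family of pairwise disjoint coterminal subsets of $\Z$ --- for instance grouping the nonzero integers by the $2$-adic valuation of their absolute value, which yields countably many classes each unbounded above and below --- and pair them as $(A_j, B_j)_{j\in\N}$. For each $j$ I build $f_j \in P$ by the same target-grid-plus-patching recipe so that $B_j^c \subseteq f_j(A_j)$, and the Ping-Pong Lemma on $X = S$ (now with countably many sets and generators) gives freeness of $\langle f_j \mid j \in \N\rangle \cong G_{\aleph_0}$. Transferring to $\Lambda$ by extension-by-identity is as before.

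The steps using double transitivity (to realize each piece map) and the reduction to a well-behaved $S$ are routine; the genuine work, and the place I expect the only real difficulty, is the explicit block design of the target grids $\{d_i^{(j)}\}$. Concretely, each $f_j$ must stretch a coterminal but \emph{sparse} class $A_j$ so that its image sweeps across every interval of $S$ except those making up $B_j$, while simultaneously keeping all of the (in the countable case, infinitely many) sets $A_k, B_k$ pairwise disjoint and coterminal. In the finite case this is a direct transcription of the modulo-$4$ pattern of Example~\ref{exa:example} (blocks of length $2n$ mapping onto themselves), so the only point needing care is verifying that the analogous block-by-block tiling can always be arranged in the countable case, where a single generator's source class meets every block and the \emph{missed} set $B_j$ must absorb, inside each block, all of the compression.
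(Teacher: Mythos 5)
Your overall architecture (a $\Z$-grid inside $\Lambda$, piece maps supplied by double transitivity, assembly by piecewise patching, ping-pong inside the convex hull, extension by the identity) is exactly the paper's. But the paper disposes of every rank $1 < \eta \leq \aleph_0$ with a single observation you missed: it suffices to prove the lemma for $\eta = 2$, since the free group of rank $2$ contains free subgroups of every countable rank, and any such subgroup of $\hat{G}_2 \leq \Aut_P(\Lambda)$ automatically inherits the trivial-extension property. The paper's proof is then literally a transcription of Example~\ref{exa:example}: the mod-$4$ classes $I_0,\dots,I_3$, two patched generators $\overline{f},\overline{g}$, and ping-pong with four sets. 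The difficulty you flag at the end of your proposal --- simultaneously designing, for infinitely many generators, target grids with $B_j^c \subseteq f_j(A_j)$ --- is precisely what this reduction makes unnecessary, and in your write-up it remains an unproven assertion on which your entire $\eta = \aleph_0$ case rests. As the proposal stands, that is a genuine gap.

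There is also a concrete error in the finite case as written. With $A_j$ the intervals $\equiv 2j$ and $B_j$ those $\equiv 2j+1 \pmod{2n}$, "blocks of length $2n$ mapping onto themselves" cannot satisfy the ping-pong hypothesis: if a block maps onto itself, then $f_j(A_j)$ meets that block in the image of a single interval, which is convex, while $B_j^c$ meets the block in a non-convex set whenever the $B_j$-interval is not the first or last interval of the block; and realigning blocks so that the $B_j$-interval is last puts the $A_j$-interval second-to-last, whence the $2n-2$ intervals preceding it would need images before the block's start point --- impossible for an order-preserving block-preserving map. Note that Example~\ref{exa:example} pairs $A_1 = N_0$ with $B_1 = N_3$, i.e.\ $B_j \equiv 2j-1$, placing each $B_j$-interval at the \emph{end} of a block; only then does the block-preserving stretch work. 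Your route (including the countable case) can be rescued without block preservation: enumerate the $A_j$-intervals as $(J_m)_{m\in\Z}$, choose any coterminal increasing sequence $(K_m)_{m\in\Z}$ of $B_j$-intervals, pick points $y_m \leq x_m$ inside $K_m$, and let $f_j$ carry $J_m$ onto $[x_{m-1}, y_m)$ and the convex gap between $J_m$ and $J_{m+1}$ onto $[y_m, x_m)$; each piece exists in $P$ by double transitivity, the pieces patch over the convex set $S$, and then $S \smallsetminus f_j(A_j) = \bigcup_m [y_m, x_m) \subseteq B_j$, which is exactly the ping-pong hypothesis. But none of this appears in the proposal, and the paper's one-line reduction to $\eta = 2$ avoids having to write it at all.
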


\begin{proof} Notice that it suffices to prove the above for $\eta = 2$. Let $P$ and $\Lambda$ satisfy the hypothesis, $\Gamma = \{a_i \in \Lambda \mid i \in \Z$ and $a_i < a_{i+1}\}$ (an order-isomorphic copy of $\Z$ in $\Lambda$) and $C_{\Gamma}$ denote the convex hull of $\Gamma$. In the same spirit as with Example~\ref{exa:example} we let $I_n = \{[a_i, a_{i+1}) \mid a_i \in \Gamma$ and $i \equiv n$  $(mod$ $4)\}$. Since $P$ is doubly transitive we can find, for all $i \equiv 0$ $(mod$ $4)$, $f_i, f_i' \in P$ for which $[a_i, a_{i+1}] \mapsto [a_i, a_{i+3}]$ and $[a_{i-3}, a_i] \mapsto [a_{i-1}, a_i]$ respectively. In turn, if we consider the sets $\{a_i \in \Gamma \mid i \equiv 0$ or $1$ $(mod$ $4)\}$ and $\{a_i \in \Gamma \mid i \equiv 0$ or $3$ $(mod$ $4)\}$ and since $P$ is closed under piecewise patching and $\Gamma$ is coterminal in $C_\Gamma$ then we can find an $\overline{f} \in P$ so that:

\begin{equation*}
\overline{f}(x) =
\begin{cases}

x & \text{if } x \not \in C_{\Gamma},\\
f_i(x) & \text{if } x \in [a_i, a_{i+1}],\mbox{ and}\\
f_i'(x) & \text{if } x \in [a_{i-3}, a_{i}].
\end{cases}
\end{equation*}\\

In much the same way as with $\overline{f}$, let $g_i,g_i' \in P$ for which $[a_i, a_{i+1}] \mapsto [a_i, a_{i+3}]$ and $[a_{i-3}, a_i] \mapsto [a_{i -1}, a_i]$ respectively, provided $i \equiv 2$ $(mod$ $4)$. Considering the sets $\{a_i \in \Gamma \mid i \equiv 2$ or $3$ $(mod$ $4)\}$ and $\{a_i \in \Gamma \mid i \equiv 0$ or $3$ $(mod$ $4)\}$ and by double transitivity of $P$ we have a $\overline{g} \in P$ for which:

\begin{equation*}
\overline{g}(x) =
\begin{cases}

x & \text{if } x \not \in C_{\Gamma},\\
g_i(x) & \text{if } x \in [a_i, a_{i+1}],\mbox{ and}\\
g_i'(x) & \text{if } x \in [a_{i-3}, a_{i}].\\
\end{cases}
\end{equation*}\\

By design, the restrictions of $\overline{g}$ and $\overline{f}$ to $C_\Gamma$ exist; denote them by $g$ and $f$, respectively. By applying the Ping-Pong Lemma to $f$ and $g$ we see that they generate a representation of $G_2$ within $\Aut(C_\Gamma)$; let $A_1 = I_0$, $B_1 = I_3$, $A_2 = I_2$ and $B_2 = I_1$. In turn, their trivial extensions to $\Lambda$ (i.e. $\overline{g} \upharpoonright \Lambda$ and  $\overline{f} \upharpoonright \Lambda$) also generate a representation of $G_2$ within $\Aut(\Lambda)$. To finish the proof we must only notice that $\overline{g}$ and $\overline{f}$ act as the identity outside $\Lambda$.

\end{proof}
In other words, double transitivity in addition to being closed under piecewise patching suffices to construct free groups of countable rank. Moreover, a doubly transitive permutation group implies a highly homogeneous linear order and in view of the previous lemma, free subgroups within such automorphism groups abound.

\begin{corollary} \label{cor:countable} Any doubly transitive subgroup of any $\Aut(\Omega)$  closed under piecewise patching contains a representation of $G_\omega$.
\end{corollary}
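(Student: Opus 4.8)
The plan is to read the corollary straight off Lemma~\ref{lem:interval}. Let $P$ be a doubly transitive subgroup of $\Aut(\Omega)$ closed under piecewise patching. For the statement to be non-vacuous $\Omega$ has more than two points, and as noted in the Background double transitivity then forces $\Omega$ to be a dense linear order without endpoints. Consequently $\Omega$ contains an interval $\Lambda$, and inside it an order-isomorphic copy $\Gamma$ of $\Z$ that is coterminal in its convex hull — precisely the combinatorial data the proof of Lemma~\ref{lem:interval} consumes. Thus the hypotheses of that lemma are satisfied, and this verification is the only place where I need to appeal to the geometry of $\Omega$.

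Next I would invoke Lemma~\ref{lem:interval} with $\eta = \aleph_0$ to obtain a representation $\hat{G}_\omega$ inside $\Aut_P(\Lambda)$, every element of which admits a trivial extension to $P$. That the lemma reaches $\eta = \aleph_0$ at all rests on the classical fact that the rank-$2$ free group already contains a free subgroup of countable rank; for instance the subgroup generated by $\{a^n b a^{-n} \mid n \geq 0\}$, or the commutator subgroup, is free of rank $\aleph_0$. Hence a faithful copy of $G_2$ automatically furnishes faithful copies of $G_\eta$ for every $1 < \eta \leq \aleph_0$, which is exactly why the lemma's ``it suffices to prove the above for $\eta = 2$'' is legitimate.

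Finally I would promote this copy from $\Aut_P(\Lambda)$ into $P$ itself. Define $\Phi \colon \hat{G}_\omega \to P$ by sending each $g$ to its trivial extension $\overline{g}$, equal to $g$ on $\Lambda$ and to the identity off $\Lambda$. Since $\overline{h}$ maps $\Lambda$ onto $\Lambda$ and both extensions fix everything outside $\Lambda$, one checks that $\overline{g}\,\overline{h} = \overline{gh}$, so $\Phi$ is a homomorphism; and restricting to $\Lambda$ recovers $g$ from $\overline{g}$, so $\Phi$ is injective. Therefore $\Phi(\hat{G}_\omega)$ is a subgroup of $P$ isomorphic to $G_\omega$, the desired representation.

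Given the lemma there is essentially no obstacle — the substantive work has already been done. The single point demanding care is the bookkeeping that the constructed copy genuinely lands in $P$ rather than merely in $\Aut(\Omega)$, and this is guaranteed by construction: piecewise patching was used to realize each generator as the restriction of an element of $P$, so its trivial extension returns to $P$. Everything else is routine.
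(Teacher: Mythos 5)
Your proof is correct and follows essentially the same route the paper intends: the corollary is stated without proof as an immediate consequence of Lemma~\ref{lem:interval}, obtained exactly as you do by applying that lemma to any interval of $\Omega$ and pushing the resulting representation into $P$ via trivial extensions. Your additional bookkeeping --- verifying that double transitivity makes $\Omega$ a dense order containing a suitable interval, justifying the reduction to $\eta=2$ via the classical embedding of $G_\omega$ into $G_2$, and checking that the trivial-extension map is an injective homomorphism into $P$ --- only makes explicit what the paper leaves implicit.
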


The permutation group constructed in Lemma \ref{lem:interval} is not pathological unless the cofinality and coinitiality of $\Omega$ are countable. Of course, it is pathological in the convex hull of $\Gamma$, but we can do better than that. As the next result suggests, being closed under disjoint union is certainly enough.

\begin{lemma} \label{cor:countable} Any doubly transitive $P\leq \Aut(\Omega)$ closed under disjoint patching and piecewise patching contains a pathological representation of $G_\omega$.
\end{lemma}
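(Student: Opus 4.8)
The plan is to spread countably many disjoint copies of the representation furnished by Lemma~\ref{lem:interval} across a coterminal family of intervals and then fuse them, copy by copy, using disjoint patching. The crucial point is that a single reduced word restricts to the \emph{same} reduced word inside \emph{every} interval, so that it cannot vanish anywhere and its support is forced to be unbounded; this is precisely where closure under disjoint patching buys us more than the piecewise patching of Lemma~\ref{lem:interval}.

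First I would fix the geometry. Since $P$ is doubly transitive and $\Omega$ has more than two points, $\Omega$ is a dense linear order without endpoints, so I may choose a strictly increasing cofinal transfinite sequence $(c_\alpha)$ and a strictly decreasing coinitial transfinite sequence $(d_\beta)$ and form the nondegenerate open intervals $\Lambda_\alpha^+ = (c_\alpha, c_{\alpha+1})$ and $\Lambda_\beta^- = (d_{\beta+1}, d_\beta)$. Re-indexing these as $\{\Lambda_j \mid j \in J\}$ gives a family of pairwise disjoint intervals whose union is coterminal in $\Omega$; moreover, by the cofinal (resp.\ coinitial) choice, for every $x \in \Omega$ some $\Lambda_j$ lies entirely above $x$ and some lies entirely below it. Note $J$ may be uncountable when $\cof(\Omega)$ or the coinitiality is uncountable, but the definition of disjoint patching allows an arbitrary index set, so this causes no difficulty.

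Next I would apply Lemma~\ref{lem:interval} with $\eta = \aleph_0$ to each $\Lambda_j$, obtaining a representation of $G_\omega$ inside $\Aut_P(\Lambda_j)$ with free generators $\{f_n^{(j)} \mid n \in \N\}$, each trivially extendable to an element $\overline{f_n^{(j)}} \in P$ whose support lies in $\Lambda_j$. Fixing such an enumeration in every interval, for each fixed $n$ the supports $\{\supp(\overline{f_n^{(j)}}) \mid j \in J\}$ are pairwise disjoint (they sit in disjoint intervals), so closure under disjoint patching yields an element $h_n \in P$ acting as $\overline{f_n^{(j)}}$ on each $\supp(\overline{f_n^{(j)}})$ and as the identity elsewhere. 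Since each $h_n$ leaves every $\Lambda_j$ invariant and satisfies $h_n \upharpoonright \Lambda_j = f_n^{(j)}$, restriction to $\Lambda_j$ defines a homomorphism $\rho_j \colon \langle h_n \mid n \in \N \rangle \to \Aut(\Lambda_j)$ carrying $h_n \mapsto f_n^{(j)}$.

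Freeness and pathology would then fall out of one observation. Let $w$ be an arbitrary nontrivial reduced word in $h_1,\dots,h_m$. For every $j$, $\rho_j(w) = w(f_1^{(j)},\dots,f_m^{(j)})$ is the same nontrivial reduced word in the free generators of the $j$-th copy, hence $\rho_j(w) \neq e$ in $\Aut(\Lambda_j)$; this already forces $w \neq e$, so $\langle h_n \mid n \in \N\rangle$ is free of rank $\aleph_0$. Furthermore $w$ has nontrivial support inside \emph{every} $\Lambda_j$, and since some $\Lambda_j$ lies above any prescribed $x$ and some below, $\supp(w)$ is unbounded in both directions, i.e.\ $w$ has unbounded support; as $w$ was arbitrary, the representation is pathological. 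The step demanding the most care is the bookkeeping that makes ``the same word in every copy'' literal—fixing a single enumeration of generators per interval so that disjoint patching fuses the $n$-th generator to the $n$-th generator—together with checking that the cofinal/coinitial arrangement of the $\Lambda_j$ genuinely yields unboundedness of $\supp(w)$ rather than mere coterminality of $\bigcup_j \Lambda_j$.
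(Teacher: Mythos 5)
Your proposal is correct and follows essentially the same route as the paper: take a coterminal family of pairwise disjoint intervals, plant the $G_\omega$ furnished by Lemma~\ref{lem:interval} in each one, and use closure under disjoint patching to fuse the $n$-th generators across all intervals into a single element of $P$. If anything, your writeup is more explicit than the paper's, since you verify that every nontrivial reduced word (not merely each generator) restricts nontrivially to every interval and hence has unbounded support --- a point the paper's proof leaves implicit when it passes from unboundedness of the generators' supports to pathology of the whole representation.
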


\begin{proof} Let $P \leq \Aut(\Omega)$ as above and $\C = \{\Lambda_i \subset \Omega \mid i \in I\}$ be any unbounded (below and above) collection of pairwise disjoint intervals. By Lemma~\ref{lem:interval} we know that for each $i \in I$ there exists a representation of $G_\omega$ in $\Aut_P(\Lambda_i)$, say $G_i$, so that any element in $G_i$ can be trivially extended to all of $\Omega$; for each $i \in I$ let $\{g_{i,j}\}_{j \in \omega}$ and $\{\overline{g}_{i,j}\}_{j \in \omega}$ denote the generating set of $G_i$ and their trivial extensions to $\Omega$, respectively. Since $P$ is closed under disjoint patching then for each $j \in \omega$ we can construct an $f_j \in P$ for which $f_j(x) = \overline{g}_{i,j}(x)$ provided that $x \in \Lambda_i$ (for some $i\in I$) and the identity otherwise. This is indeed possible since the supports of any pair $\overline{g}_{k,j},\overline{g}_{l,j}$ (with $k \not = l$) are disjoint. By assumption, $\C$ is unbounded and consequently so is the support of each $f_j$. To this end we must only notice that $\{f_j\}_{j \in \omega}$ generates a pathological representation of $G_\omega$ in $P$.

\end{proof}

We also prove in the sequel that Lemma~\ref{cor:countable} can be further extended up to continuum-size free groups. Moreover, and depending on the transitivity of the permutation group, the representation can be made at least 2-transitive.
\newpage

\section{The Uncountable Case and Transitivity}
As this section illustrates, Lemma~\ref{cor:countable} can be extended much further. That said, the countable case is as far as the Ping-Pong Lemma can take us. For the uncountable case we turn to a construction by Cameron \cite{Cameron99oligomorphicpermutation} of the free group of size continuum within $\Aut(\Q)$. We exploit such a construction by generalizing it to a certain collection of regular cardinals. The following theorem can be found in \cite{MR756630} and is key to what follows.

\begin{theorem} \label{thm:kunen} For $\kappa \geq \omega$ a regular cardinal

\begin{itemize}
\item If $\mathcal{A} \subset \PP(\kappa)$ is an $\ad$ family and $|\mathcal{A}| = \kappa$ then $\mathcal{A}$ is not maximal.
\item There is a $\madf$ within $\PP(\kappa)$ with cardinality $\geq \kappa^+$.
\end{itemize}
\end{theorem}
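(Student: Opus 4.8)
The plan is to prove the two bullets in turn, obtaining the second from the first. For the first, fix an enumeration $\mathcal{A} = \{A_\alpha \mid \alpha < \kappa\}$ of an $\ad$ family of size $\kappa$ and build, by transfinite recursion on $\alpha < \kappa$, a ``diagonal'' set $B = \{b_\alpha \mid \alpha < \kappa\}$ that is $\ad$ from every member of $\mathcal{A}$ and distinct from all of them, so that $\mathcal{A} \cup \{B\}$ strictly enlarges $\mathcal{A}$. At stage $\alpha$ I would choose
$$b_\alpha \in A_\alpha \setminus \Big( \{b_\beta \mid \beta < \alpha\} \cup \textstyle\bigcup_{\beta < \alpha} A_\beta \Big).$$
The one thing to check is that this difference is nonempty---in fact of size $\kappa$. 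Since the members of $\mathcal{A}$ are pairwise $\ad$, each $A_\alpha \cap A_\beta$ has size $<\kappa$, so $A_\alpha \cap \bigcup_{\beta<\alpha} A_\beta$ is a union of $|\alpha| < \kappa$ sets of size $<\kappa$; here regularity of $\kappa$ forces this union to have size $<\kappa$, leaving $\kappa$ admissible choices for $b_\alpha$ even after discarding the $<\kappa$ points $b_\beta$ ($\beta<\alpha$). The recursion then guarantees $B \cap A_\gamma \subseteq \{b_\alpha \mid \alpha \leq \gamma\}$ for each $\gamma$, because every $b_\alpha$ with $\alpha > \gamma$ was chosen outside $A_\gamma$; hence $|B \cap A_\gamma| \le |\gamma|+1 < \kappa$ and $B$ is $\ad$ from each $A_\gamma$, while $B \neq A_\gamma$ since no set of size $\kappa$ is $\ad$ from itself. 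Thus $\mathcal{A}$ is not maximal.

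For the second bullet I would use the first as an engine. Beginning with $\mathcal{A}_0$, a partition of $\kappa$ into $\kappa$ blocks each of size $\kappa$ (trivially an $\ad$ family of size $\kappa$, using $\kappa\cdot\kappa=\kappa$), I would run a recursion of length $\kappa^+$: at every stage $\alpha < \kappa^+$ the family $\mathcal{A}_\alpha$ still has size exactly $\kappa$, because $\kappa + |\alpha| = \kappa$ whenever $|\alpha| \le \kappa$, so the first bullet hands me a set $B_\alpha$ that is $\ad$ from all of $\mathcal{A}_\alpha$ and I set $\mathcal{A}_{\alpha+1} = \mathcal{A}_\alpha \cup \{B_\alpha\}$, taking unions at limits. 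An increasing union of $\ad$ families is $\ad$, so $\mathcal{A}_{\kappa^+} = \bigcup_{\alpha < \kappa^+} \mathcal{A}_\alpha$ is an $\ad$ family; it contains the $\kappa^+$ pairwise distinct sets $B_\alpha$ and hence has cardinality $\kappa^+$. A final appeal to Zorn's Lemma (the union of a chain of $\ad$ families is again $\ad$) extends $\mathcal{A}_{\kappa^+}$ to a maximal family $\mathcal{M}$, and then $|\mathcal{M}| \geq |\mathcal{A}_{\kappa^+}| = \kappa^+$, which is the desired $\madf$.

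The only genuinely delicate step is the nonemptiness of the difference in the first recursion, and it is exactly there that regularity is indispensable: for singular $\kappa$ a union of fewer than $\kappa$ sets of size $<\kappa$ can exhaust $A_\alpha$ and the diagonalization stalls. Everything else---the cardinal bookkeeping along the length-$\kappa^+$ recursion and the closure of $\ad$ families under increasing unions---is routine. I would also stress that the second bullet is purely an existence statement: small maximal $\ad$ families can occur (for $\kappa = \omega$, any partition $\{A, \omega \setminus A\}$ into two infinite pieces is already maximal), so the claim is not that every $\madf$ is large but that one of size $\geq \kappa^+$ exists.
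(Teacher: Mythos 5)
Your proof is correct, but there is nothing in the paper to compare it against: Theorem~\ref{thm:kunen} is stated as a known result, cited to Kunen's text \cite{MR756630}, and used as a black box, so you have supplied the proof the paper omits. Your first bullet is exactly the standard diagonalization: the essential point, which you isolate correctly, is that regularity of $\kappa$ makes $A_\alpha \cap \bigcup_{\beta<\alpha} A_\beta$ a union of $|\alpha|<\kappa$ sets each of size $<\kappa$, hence of size $<\kappa$, so the recursion never stalls; your verification that $B$ has size $\kappa$, is $\ad$ from every $A_\gamma$ (via $B \cap A_\gamma \subseteq \{b_\alpha \mid \alpha \le \gamma\}$), and differs from every $A_\gamma$ is complete. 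For the second bullet your argument is correct but does more work than necessary: once you have (i) the first bullet, (ii) a size-$\kappa$ $\ad$ family (your partition $\mathcal{A}_0$ of $\kappa$ into $\kappa$ blocks of size $\kappa$), and (iii) Zorn's lemma (unions of chains of $\ad$ families are $\ad$), you can extend $\mathcal{A}_0$ directly to a maximal $\ad$ family $\mathcal{M}$; then $|\mathcal{M}| \ge |\mathcal{A}_0| = \kappa$, and $|\mathcal{M}| = \kappa$ is impossible by the first bullet, so $|\mathcal{M}| \ge \kappa^+$ with no transfinite accumulation of the sets $B_\alpha$ needed. Your $\kappa^+$-length recursion re-derives by hand what maximality gives for free, though it is not wrong, and it has the minor virtue of exhibiting $\kappa^+$ concrete pairwise $\ad$ sets before Zorn is invoked. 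Your closing remark is also correct and worth keeping: finite maximal $\ad$ families such as $\{A, \omega\setminus A\}$ exist, which is precisely why the first bullet must fix the cardinality at exactly $\kappa$.
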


In the case of $\aleph_0$ it is possible to construct an $\ad$ family within $\PP(\aleph_0)$ of size $2^{\aleph_0}$. It is this fact (along with some insightful ideas) that enables Cameron\footnote{In fact, Cameron credits such a construction to Jim Kister} to construct $G_\mathfrak{c}$ within $\Aut(\Q,<)$ . Upon careful inspection of such a construction it becomes clear that the same can be done with any large subgroup of $\Aut(\Omega)$, where $\Omega$ is any linear order. The reader who is familiar with the aforementioned would benefit by skipping the proof of the following theorem for it is almost a faithful copy of the original.

\begin{lemma} \label{lem:uncountable} Let $\Omega$ be a linear order and $P \leq \Aut(\Omega)$ be doubly transitive, closed under piecewise patching and disjoint patching. For any interval $\Lambda \subset \Omega$ there exists a representation $\hat{G}_\mathfrak{c}$ of $G_\mathfrak{c}$ in $P$ for which any element in $\hat{G}_\mathfrak{c}$ is the identity outside $\Lambda$.
\end{lemma}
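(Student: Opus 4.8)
The plan is to mimic Cameron's construction, replacing his explicit appeal to $\Aut(\Q)$ by the two closure hypotheses on $P$. First I would realise the continuum-sized $\ad$ family on $\aleph_0$ (whose existence is recalled just after Theorem~\ref{thm:kunen}) in the convenient shape of a family $\{g_\alpha : \alpha < \mathfrak{c}\}$ of functions $g_\alpha : \omega \to \omega$ that are \emph{pairwise eventually different}; that is, $\{n : g_\alpha(n) = g_\beta(n)\}$ is finite whenever $\alpha \neq \beta$. Identifying each $g_\alpha$ with its graph inside $\omega \times \omega$, such a family is exactly an $\ad$ family of size $\mathfrak{c}$, so this reformulation costs nothing beyond the fact already cited.

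Next I would erect the geometric scaffolding inside $\Lambda$. Since $P$ is doubly transitive, $\Omega$ is a dense order without endpoints, so $\Lambda$ contains a countable family $\{I_n : n \in \omega\}$ of pairwise disjoint non-degenerate subintervals. Applying Lemma~\ref{lem:interval} to each $I_n$ yields a representation of $G_\omega$ inside $\Aut_P(I_n)$; write $\{x_{n,k} : k \in \omega\}$ for its free generators and $\{\overline{x}_{n,k}\}$ for their trivial extensions to $P$ (the identity off $I_n$, hence off $\Lambda$). For each $\alpha < \mathfrak{c}$ the elements $\{\overline{x}_{n,g_\alpha(n)} : n \in \omega\}$ have pairwise disjoint supports, being housed in disjoint intervals, so closure under disjoint patching delivers a single $f_\alpha \in P$ acting as $x_{n,g_\alpha(n)}$ on each $I_n$ and as the identity elsewhere; in particular $f_\alpha$ is the identity outside $\Lambda$. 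I claim $\{f_\alpha : \alpha < \mathfrak{c}\}$ freely generates the desired $\hat{G}_\mathfrak{c}$.

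The heart of the argument, and the step I expect to be most delicate, is verifying freeness; note that the Ping-Pong Lemma is unavailable here, as it governs only finitely many generators. Given a nontrivial reduced word $w = f_{\alpha_1}^{\epsilon_1}\cdots f_{\alpha_m}^{\epsilon_m}$ (consecutive indices distinct, exponents nonzero), let $\beta_1,\ldots,\beta_r$ enumerate the distinct indices occurring in $w$. Because the family is pairwise eventually different, only finitely many $n$ can satisfy $g_{\beta_i}(n) = g_{\beta_j}(n)$ for some $i \neq j$; fixing any $n$ outside this finite set makes $g_{\beta_1}(n),\ldots,g_{\beta_r}(n)$ pairwise distinct. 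Restricting $w$ to $I_n$ then yields $\prod_{i} x_{n,g_{\alpha_i}(n)}^{\epsilon_i}$ in the free group $\langle x_{n,k} : k \in \omega\rangle$, and this word is already reduced: consecutive syllables carry distinct generators, since consecutive $\alpha_i$ are distinct and the relevant $g$-values are pairwise distinct at $n$. Hence $w$ acts nontrivially on $I_n$ and $w \neq e$. The conceptual point worth flagging is why \emph{distinct} generators per interval are essential: had each $f_\alpha$ acted on $I_n$ as a power of one fixed generator, all the $f_\alpha$ would commute and only a free abelian group would emerge. It is precisely the interaction of the countable-rank gadget supplied by Lemma~\ref{lem:interval} with the eventual-difference (equivalently, almost disjointness) of the indexing family that converts finite overlaps into genuinely non-commuting behaviour, and thereby yields freeness.
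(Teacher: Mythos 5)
Your proposal is correct and follows essentially the same route as the paper: countably many disjoint subintervals of $\Lambda$, a copy of $G_\omega$ on each via Lemma~\ref{lem:interval}, disjoint patching along a continuum-sized almost disjoint family, and freeness by locating an interval on which the relevant generators are pairwise distinct. Your ``pairwise eventually different functions'' are just the enumerations $h_\gamma$ of the paper's $\ad$ family in disguise, and your freeness verification is in fact a more carefully spelled-out version of the argument the paper leaves partly implicit.
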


\begin{proof} Begin by taking any interval $\Lambda \subseteq \Omega$ and a copy of $\N$, $\Gamma = \{a_i \in \Lambda \mid i \in \N$ and $a_i < a_{i+1}\}$, in $\Lambda$. By virtue of Lemma~\ref{lem:interval} and for each $i \in \Z$ we are guaranteed a representation of $G_\omega$ within $\Aut_P(a_i,a_{i+1})$ which can be trivially extended to a representation, $G_i$, of $G_\omega$ in $P$. For each $i$, let $f_{i,0}, f_{i,1}, \ldots$ be a set of generators of $G_i$. Next take any $\ad$ family $\mathcal{A} = \{ A_\gamma \subset \N \mid \gamma \in \mathfrak{c}\}$ and for each $\gamma \in \mathfrak{c}$ let $h_\gamma : \N \rightarrow A_\gamma$ be the function that enumerates $A_\gamma$. The crucial step in this proof is to let $g_\alpha$, for any $\alpha \in \mathfrak{c}$, be the permutation on $\Omega$ that when restricted to $(a_i, a_{i+1})$ is exactly $f_{i,h_\alpha(i)}$ and the identity otherwise. This is possible since $P$ is closed under disjoint patching and for $k \not = l$, $\supp(f_{l,n}) \cap  \supp(f_{k,m}) = \emptyset$. To check that indeed $\{g_\alpha \mid \alpha \in \mathfrak{c}\}$ generates $G_\mathfrak{c}$ in $P$ take any word $w(g_{\alpha_0}, \ldots, g_{\alpha_n})$ on distinct $\alpha_0, \ldots, \alpha_n \in \mathfrak{c}$. Recall that by almost disjointness of $\mathcal{A}$ we have that $|A_{\alpha_0} \cap \ldots \cap A_{\alpha_n}| \in \N$ and consequently any pair $g_{\alpha_k}, g_{\alpha_l}$ can agree on at most finitely many intervals. In other words, $g_{\alpha_k} \upharpoonright (a_i, a_{i+1}) =g_{\alpha_l} \upharpoonright (a_i, a_{i+1})$ for at most finitely many $i \in \N$. Clearly, this argument also holds for any finite collection of elements from $\{g_\alpha \mid \alpha \in \mathfrak{c}\}$ and the proof is complete.

\end{proof}

Just as was done with Lemma~\ref{cor:countable} it is possible to extend the previous lemma to a pathological representation of $G_\mathfrak{c}$. Indeed, we must only take a coterminal collection of disjoint intervals from $\Omega$ and notice that for each such interval, say $\Lambda$, and any such $P$, Lemma~\ref{lem:uncountable} can be easily applied to $\Aut_P(\Lambda)$. Consequently, we can build a choice function in very much the same spirit as with Lemma~\ref{cor:countable}.

\begin{corollary} For $\Omega$ a linear order we have that any doubly transitive subgroup of  $\Aut(\Omega)$ closed under piecewise patching and disjoint patching contains a pathological representation of $G_\mathfrak{c}$.

\end{corollary}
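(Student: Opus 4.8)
The plan is to combine Lemma~\ref{lem:uncountable} with the disjoint-patching technique already developed in the proof of Lemma~\ref{cor:countable}. The target statement asserts that any doubly transitive $P \leq \Aut(\Omega)$ closed under piecewise and disjoint patching contains a \emph{pathological} representation of $G_\mathfrak{c}$, so the work beyond Lemma~\ref{lem:uncountable} is entirely in arranging that every nontrivial element of the representation has unbounded support, while preserving freeness of rank $\mathfrak{c}$.

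First I would fix a collection $\C = \{\Lambda_i \subset \Omega \mid i \in I\}$ of pairwise disjoint intervals that is coterminal (unbounded below and above) in $\Omega$; such a collection exists because double transitivity forces $\Omega$ to be a dense order without endpoints, so it contains order-copies of $\Z$ along which one can carve out disjoint intervals. For each $i \in I$, Lemma~\ref{lem:uncountable} applied to the interval $\Lambda_i$ yields a representation $\hat{G}^{(i)}_\mathfrak{c}$ of $G_\mathfrak{c}$ whose elements all act as the identity outside $\Lambda_i$; write $\{g^{(i)}_\alpha \mid \alpha \in \mathfrak{c}\}$ for its generating set. Since the supports live in the pairwise disjoint $\Lambda_i$, for each fixed $\alpha \in \mathfrak{c}$ the family $\{g^{(i)}_\alpha \mid i \in I\}$ has pairwise disjoint supports, so closure under disjoint patching produces an element $f_\alpha \in P$ acting as $g^{(i)}_\alpha$ on each $\Lambda_i$ and as the identity elsewhere. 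The claim is that $\{f_\alpha \mid \alpha \in \mathfrak{c}\}$ freely generates $G_\mathfrak{c}$ and that each $f_\alpha$ is pathological.

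The freeness step runs exactly as in Lemma~\ref{lem:uncountable}, only fibrewise: given a reduced word $w(f_{\alpha_0}, \dots, f_{\alpha_n})$ in distinct generators, its restriction to any single $\Lambda_i$ is precisely the same reduced word evaluated in $\hat{G}^{(i)}_\mathfrak{c}$, which is a free group, so the word is nontrivial on each $\Lambda_i$ and hence nontrivial in $P$. The pathology is immediate from the coterminality of $\C$: any nontrivial reduced word $w$ acts nontrivially inside every $\Lambda_i$ — each $\Lambda_i$ contributes to $\supp(w)$ — and since the $\Lambda_i$ are unbounded both below and above in $\Omega$, the support of $w$ is likewise unbounded, so $w$ has unbounded support. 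Thus no nonidentity element of the generated group has bounded support, which is precisely pathology.

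The step I expect to require the most care is verifying that the disjoint-patching closure genuinely delivers a \emph{single} $f_\alpha \in P$ rather than merely an abstract permutation: the definition of disjoint patching quantifies over families $\{g_i\}$ with pairwise disjoint support and guarantees the glued map lies in $P$, so one must confirm that the $g^{(i)}_\alpha$, being trivial extensions supplied by Lemma~\ref{lem:uncountable}, do have genuinely disjoint supports contained in the disjoint $\Lambda_i$ — which they do by construction. The only other subtlety is bookkeeping: one should ensure the index $\alpha$ ranges over the \emph{same} copy of $\mathfrak{c}$ across all $i$, so that $f_\alpha$ is well-defined for each $\alpha$ and distinct $\alpha$ yield the distinct generators needed for rank $\mathfrak{c}$; this is arranged by using a fixed almost disjoint family and a fixed enumeration uniformly across the intervals $\Lambda_i$, exactly paralleling the construction in the proof of Lemma~\ref{cor:countable}.
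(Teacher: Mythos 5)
Your proposal is correct and follows essentially the same route as the paper: take a coterminal family of pairwise disjoint intervals, apply Lemma~\ref{lem:uncountable} inside each one, and use disjoint patching to glue the $\alpha$-th generators across intervals, with freeness checked by restricting a reduced word to a single interval and pathology following from coterminality. This is exactly the ``choice function in the same spirit as Lemma~\ref{cor:countable}'' that the paper sketches, so no comparison is needed.
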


What is not obvious is that if the permutation group P is $n$-transitive (for some $n\in \N$) then above representation can be designed to be $n$-transitive provided, of course, that $|\Omega| \leq \mathfrak{c}$. In view of Theorem~\ref{thm:main}, Theorem~\ref{thm:forRandQ} might seem redundant. That said, we think it greatly facilitates the understanding of Theorem~\ref{thm:main}. The technique employed in proving the following is modeled on a result by McCleary (\cite{MR787955} pg. 2).

\begin{theorem} \label{thm:forRandQ} Let $|\Omega| \leq \mathfrak{c}$ and $P \leq \A(\Omega)$ be $n$-transitive, closed under piecewise patching and disjoint patching. Then $G_\lambda$ can be represented as a pathological $n$-transitive permutation group within $P$ provided $\mathfrak{c} \geq \lambda \geq |\Omega|$. In particular, any large subgroup of $\Aut(\Q)$ (resp. $\A(\R)$) contains a pathological $\omega$-transitive representation of $G_\lambda$ (resp. $G_\mathfrak{c}$).

\end{theorem}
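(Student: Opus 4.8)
## Proof Proposal for Theorem~\ref{thm:forRandQ}

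The plan is to upgrade the pathological representation of $G_\mathfrak{c}$ from the preceding corollary so that it additionally becomes $n$-transitive, using the $n$-transitivity hypothesis on $P$ together with a back-and-forth bookkeeping argument modeled on McCleary's technique. The key observation is that transitivity is a condition we can \emph{impose} on the construction while it is being carried out, rather than something we verify afterwards. Since $|\Omega| \leq \lambda \leq \mathfrak{c}$, there are exactly $\lambda$ many $n$-tuples from $\Omega^n$ (more precisely, at most $\mathfrak{c}$ many, and we will index our free generators so as to accommodate them). First I would enumerate the set of all pairs of $n$-tuples $\bigl((\bar a),(\bar b)\bigr) \in \Omega^n \times \Omega^n$ in a list of length $\leq \mathfrak{c}$; the goal is to ensure that for each such pair, some element of the group we build carries $(\bar a)$ to $(\bar b)$.

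The main construction proceeds as follows. Begin with the pathological representation of $G_\mathfrak{c}$ supplied by the corollary, whose free generators $\{g_\alpha \mid \alpha \in \mathfrak{c}\}$ are supported on a coterminal family of disjoint intervals and are built from the almost-disjoint family $\mathcal{A}$ as in Lemma~\ref{lem:uncountable}. The freeness of this family is robust: by the almost-disjointness argument, no finite nontrivial word collapses, and crucially, \emph{enlarging} the underlying almost-disjoint family or \emph{composing} generators with prescribed local behavior does not destroy freeness, as long as the almost-disjoint structure is preserved. I would then interleave a transitivity-seeding step with the free construction. At stage $\xi$ in the enumeration, confronted with the pair $\bigl((\bar a),(\bar b)\bigr)$, I use the $n$-transitivity of $P$ to produce an element $t_\xi \in P$ with $t_\xi(\bar a) = \bar b$, and I arrange for $t_\xi$ (or a suitable modification of it composed with one of the free generators) to lie in the group being generated. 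The device that makes this compatible with freeness is to reserve, for each transitivity requirement, a fresh generator whose almost-disjoint index set has been chosen disjoint from all previously committed behavior on the finitely many intervals relevant to the $n$-tuple in question.

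The hard part will be reconciling three demands simultaneously: (i) the group must remain \emph{free} of rank exactly $\lambda$, (ii) it must be \emph{$n$-transitive}, and (iii) it must remain \emph{pathological}. The tension is between (i) and (ii): transitivity forces us to insert elements that move prescribed tuples, and a naive insertion of such an element as a \emph{new} generator could both inflate the rank and, worse, introduce a relation if the inserted element coincides with a word in the existing generators on all but finitely many intervals. The resolution I would pursue is to realize each transitivity requirement \emph{within} the group already generated — that is, to show that the $n$-transitive element $t_\xi$ can be taken to agree with a suitable infinite product (or a cleverly chosen generator $g_\alpha$ whose almost-disjoint set $A_\alpha$ is engineered to dictate the correct local maps on the finitely many intervals containing the coordinates of $\bar a$ and $\bar b$). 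Because only finitely many intervals are involved in any single $n$-tuple requirement, and because an almost-disjoint family on $\N$ of size $\mathfrak{c}$ has ample freedom to prescribe finitely many coordinates while retaining almost-disjointness, one can meet each requirement by the \emph{choice} of $A_\alpha$ rather than by adding structure that would threaten freeness. Finally, the in-particular clause follows immediately: for $\Aut(\Q)$ we have $|\Omega| = \aleph_0 \leq \lambda \leq \mathfrak{c}$ and every large subgroup is $n$-transitive for all $n$, hence $\omega$-transitive, yielding a pathological $\omega$-transitive copy of $G_\lambda$; for $\Aut(\R)$ we have $|\Omega| = \mathfrak{c}$, forcing $\lambda = \mathfrak{c}$, and the same argument delivers a pathological $\omega$-transitive copy of $G_\mathfrak{c}$.
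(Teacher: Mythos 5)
Your overall scaffolding matches the paper's: index the transitivity requirements injectively by ordinals $\alpha < \lambda$ (pairs of $n$-tuples $\mapsto$ generators) and protect freeness via supports on a coterminal family of disjoint intervals. But the mechanism you ultimately commit to for meeting each requirement --- engineering the almost-disjoint set $A_\alpha$ so that the generator $g_\alpha$ ``dictates the correct local maps'' on the finitely many intervals containing the coordinates of $\bar a$ and $\bar b$ --- cannot work. In the construction of Lemma~\ref{lem:uncountable}, the set $A_\alpha$ does nothing more than select, for each interval $(a_i,a_{i+1})$ of a fixed family, which member of a \emph{fixed countable list} $f_{i,0},f_{i,1},\ldots$ of generators $g_\alpha$ copies there. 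Every element so obtainable preserves each of these intervals setwise and fixes everything outside their union. Hence no choice of $A_\alpha$ yields an element carrying an arbitrary $\bar a$ to an arbitrary $\bar b$: the coordinates may lie outside the intervals altogether, may straddle several different intervals (so the required element must move points from one interval into another), and even when $a_k$ and $b_k$ lie in the same interval there is no reason any of the countably many prescribed generators sends $a_k$ to $b_k$. Transitivity genuinely requires importing a fresh element of $P$, not selecting among already-fixed local generators; your freedom in choosing $A_\alpha$ is freedom of \emph{indexing}, not of \emph{dynamics}.

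The paper's resolution --- which your parenthetical remark about composing with ``prescribed local behavior'' gestures at but never carries out --- is to splice the witness into the generator itself. For the pair indexed by $\alpha$, choose a bounded interval $\Lambda$ containing all coordinates of $\bar a,\bar b$ and, by $n$-transitivity together with piecewise patching, an $f \in P$ with $f(\bar a)=\bar b$ that is the identity off $\Lambda$; then define $g_\alpha$ to agree with $f$ on $\Lambda$, to agree with the $\alpha^{th}$ generator $f_{\alpha,j}$ of a $G_\lambda$-representation supported in $I_j$ for every interval $I_j$ of the coterminal family $\C$ (these exist by Lemma~\ref{lem:uncountable}), and to be the identity elsewhere. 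Freeness is not threatened: any word involves finitely many generators, hence finitely many bounded splice intervals $\Lambda$, and since $\C$ is coterminal there are intervals $I_j$ disjoint from all of them, on which the word restricts to the corresponding nontrivial word in the free generators $f_{\alpha,j}$. This simultaneously keeps the rank exactly $\lambda$, gives pathologicality (the supports are unbounded), and dissolves your worry about inserted elements inflating the rank or colliding with existing words --- the transitivity witnesses are not new group elements at all, so conditions (i), (ii), (iii) in your ``hard part'' never actually compete.
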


\begin{proof} We prove the theorem for $2$-transitivity for it can be easily extended to any $n$-transitivity. Let $P$ and $\lambda$ be given, $\C$ be a coterminal collection of pairwise disjoint bounded intervals from $\Omega$ and denote $\Omega_2 = \{ (a,b) \in \Omega\times \Omega \mid a <b\}$. Let $F: \Omega_2 \times \Omega_2 \rightarrow \lambda$ be any injection. For any pair $(a_1, a_2), (b_1, b_2) \in \Omega_2$ take any bounded interval $\Lambda$ containing the points $a_1, a_2, b_1$ and $b_2$ for which there exists an $f \in P$ so that $f(a_i) = b_i$ and $f(x) = x$ whenever $x \in \Omega \smallsetminus \Lambda$; this is indeed possible since $P$ is closed under piecewise patching. For each interval $I_j \in \C$, let $\{f_{i,j}\}_{i \in \lambda} \subset P$ be the generating set of a representation of $G_\lambda$ for which any generator is the identity outside $I_j$. This is indeed possible by virtue of Lemma~\ref{lem:uncountable}. Next, let $\alpha = F[((a_1,a_2),(b_1,b_2))]$ and $g_\alpha \in P$ be the one for which $g_\alpha \upharpoonright \Lambda = f \upharpoonright \Lambda$, $g_\alpha \upharpoonright I_j = f_{\alpha,j} \upharpoonright I_j$ and the identity otherwise. For all other $\beta \in \Lambda$ not in the range of $F$ we simply let $g_\beta \upharpoonright I_j = f_{\beta, j} \upharpoonright I_j$ and the identity otherwise. Finally, the set $\{g_\alpha \mid \alpha \in \lambda\}$ generates a pathological $2$-transitive representation of $G_\lambda$ within $P$. The extension to $n$-transitivity (resp. $\omega$-transitivity) is done by extending $F$ to an injection from $\displaystyle \bigcup_{j\leq n}\Omega_j^j$ into $\lambda$ (resp. by extending $F$ to an injection from $\displaystyle \bigcup_{n\in\N} \Omega_n^n$ into $\lambda$), where $\Omega_j^j$ represents the j$^{th}$ cartesian product of $\Omega_j=\{(a_1,\ldots,a_j)\in\Omega^j\mid a_i<a_{i+1}\}$.

Recall that any $2$-transitive $l$-group is $\omega$-transitive. Consequently, any large permutation group (on a linear order) is then $\omega$-transitive.

\end{proof}

We are now ready to state and prove the main theorem.

\begin{theorem}\label{thm:main} For $\Omega$ a linear order and $2\leq n\leq\omega$ we have

\begin{enumerate}
\item Any $n$-transitive $P \leq \A(\Omega)$ closed under piecewise patching and disjoint patching contains a pathological representation of $G_{\lambda}$ for $2 \leq \lambda \leq \mathfrak{c}$. Moreover, if $|\Omega| \leq \lambda$ then $G_\lambda$ can be represented as pathological and $n$-transitive within $P$.

\item ($\mathfrak{c}$ regular) If there exists a collection of $\kappa$ disjoint intervals from $\Omega$ then for any $n$-transitive $P \leq \A(\Omega)$ closed under piecewise patching and disjoint patching and any $n < \omega$ for which $\lambda = \mathfrak{c}_n \leq \kappa$ (resp. $\lambda = \mathfrak{c}_\omega \leq \kappa$) there exists a pathological representation of $G_{\lambda^+}$  (resp. $G_\lambda$) in $P$. Moreover, if $\lambda = |\Omega|$ then $G_\lambda$ and $G_{\lambda^+}$ (resp. $G_\lambda$) can be represented as pathological and $n$-transitive within $H$.

\item  (GCH) If for some $n \leq \omega$, $|\Omega| =$ \emph{cof}$(\Omega)$$= \aleph_n$ then any $n$-transitive $P \leq \A(\Omega)$ closed under piecewise patching and disjoint patching contains pathological $n$-transitive representations of $G_{{\aleph_n}}$. Moreover, if $n \in \N$ then the same is true of $G_{2^{\aleph_n}}$.

\end{enumerate}

\end{theorem}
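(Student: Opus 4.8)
The plan is to handle the three parts in increasing order of difficulty, recycling everything built so far and adding one genuinely new ingredient — Kunen's maximal almost disjoint families (Theorem~\ref{thm:kunen}) — in order to climb past $\mathfrak{c}$.

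Part (1) I would treat as essentially a corollary of what precedes. For $2\le\lambda\le\mathfrak{c}$ a free group of rank $\lambda$ sits inside $G_\mathfrak{c}$ as the subgroup generated by $\lambda$ of the free generators, and a subgroup of a pathological group is pathological (it simply inherits no nontrivial element of bounded support); so the pathological representation of $G_\mathfrak{c}$ produced by the corollary to Lemma~\ref{lem:uncountable} already contains a pathological $\hat G_\lambda$. The ``moreover'' clause, which adds $|\Omega|\le\lambda\le\mathfrak{c}$, is exactly the content of Theorem~\ref{thm:forRandQ}, whose transitivity-encoding I would invoke directly.

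Part (2) is the heart of the matter, and I would prove it by induction on $n$ using a single recursive device: \emph{if a regular cardinal $\mu$ is given and an interval $\Lambda$ contains $\mu$ pairwise disjoint sub-intervals each of which supports (carries a trivially extendable copy of) $G_\mu$, then $\Lambda$ supports $G_{\mu^+}$.} Its proof is a lift of Lemma~\ref{lem:uncountable}: index the sub-intervals by $\mu$ in place of $\N$, let the local copies be $G_\mu$ with generators $\{f_{i,\xi}\}_{\xi<\mu}$, and — this is where Theorem~\ref{thm:kunen} enters — replace the size-$\mathfrak{c}$ almost disjoint family on $\N$ by a maximal almost disjoint family $\{A_\gamma:\gamma<\mu^+\}$ on $\mu$, which exists with cardinality $\ge\mu^+$ precisely because $\mu$ is regular. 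Setting $g_\gamma\upharpoonright(a_i,a_{i+1})=f_{i,h_\gamma(i)}$ with $h_\gamma$ enumerating $A_\gamma$, freeness follows as before: distinct enumerations $h_{\gamma_0},\dots,h_{\gamma_m}$ pairwise agree on a set of size $<\mu$ (a shared value lies in some $A_{\gamma_k}\cap A_{\gamma_l}$), so by regularity of $\mu$ there is a sub-interval on which all $m+1$ chosen local generators differ, whence any reduced word restricts there to a nontrivial reduced word in the local free group. Starting from ``every interval supports $G_\mathfrak{c}$'' (Lemma~\ref{lem:uncountable}) and iterating $\mathfrak{c}=\mathfrak{c}_0\to\mathfrak{c}_1\to\cdots\to\mathfrak{c}_n$ — legitimate since each $\mathfrak{c}_k$ with $1\le k$ is a successor cardinal hence regular, and $\mathfrak{c}_0=\mathfrak{c}$ is regular by hypothesis — produces $G_{\mathfrak{c}_n^+}$. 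The singular case $\mathfrak{c}_\omega=\sup_n\mathfrak{c}_n$ does not admit the $(\cdot)^+$ step (Kunen needs regularity), which is why only $G_{\mathfrak{c}_\omega}$ is claimed; I would realize it as the increasing union $\bigcup_n\hat G_{\mathfrak{c}_n}$ of a coherent chain of the representations just built, which is free because every finite subword lands in some $\hat G_{\mathfrak{c}_n}$. Pathology is then obtained as in Lemma~\ref{cor:countable}, by running the whole construction simultaneously on a coterminal family of disjoint intervals and gluing via disjoint patching; and $n$-transitivity (when $\lambda\ge|\Omega|$) is obtained exactly as in Theorem~\ref{thm:forRandQ}, by fixing an injection $F$ from the set of pairs of ordered $n$-tuples into $\lambda$ and loading onto the generator indexed $F(\bar a,\bar b)$ a patching automorphism carrying $\bar a$ to $\bar b$, for which $\lambda\ge|\Omega|$ leaves room.

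The step I expect to fight hardest for is the geometric hypothesis of the recursive device: that at every level the interval being subdivided really does contain $\mu$ pairwise disjoint sub-intervals. Since $P$ is doubly transitive, any two bounded open intervals of $\Omega$ are order-isomorphic (an element of $P$ sending one pair of endpoints to the other restricts to an isomorphism), so the supremum $\nu$ of the number of pairwise disjoint sub-intervals is the same for every bounded interval and satisfies $\nu\le\kappa$. The induction then closes as soon as $\nu\ge\mathfrak{c}_{n-1}$, and this is where I would concentrate the care: I would derive it from the standing hypothesis that $\Omega$ carries $\kappa\ge\mathfrak{c}_n$ pairwise disjoint intervals by a cofinality count, arguing that were $\nu<\mathfrak{c}_{n-1}$ the $\kappa$ witnessing intervals would be forced to spread cofinally or coinitially, so that covering their convex hull by bounded pieces along a cofinal-and-coinitial scaffold would express $\kappa$ as a union of $\max(\cof(\Omega),\text{coinitiality})$-many sets of size $\le\nu$ and, by regularity of $\mathfrak{c}_n$, drive one of those invariants up to $\mathfrak{c}_n$ in a way the clustering forbids. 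This cofinality bookkeeping, not the algebra, is the delicate point — and it is precisely what the hypotheses of part (3) are engineered to pin down. Under GCH one has $\mathfrak{c}_k=\aleph_{k+1}$ and $2^{\aleph_k}=\aleph_{k+1}$, while $|\Omega|=\cof(\Omega)=\aleph_n$ supplies simultaneously the requisite stock of disjoint intervals and the cofinality control, so that parts (1) and (2) combine to give the pathological $n$-transitive copies of $G_{\aleph_n}$ and, for finite $n$, of $G_{2^{\aleph_n}}=G_{\aleph_{n+1}}=G_{\mathfrak{c}_{n-1}^+}$.
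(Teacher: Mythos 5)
Your engine for part (2) is the paper's engine: Kunen's almost disjoint families on a regular $\mu$ (Theorem~\ref{thm:kunen}), local free groups glued by disjoint patching, freeness because finitely many enumerations $h_\gamma$ can pairwise agree on fewer than $\mu$ indices, regularity of successor cardinals to drive the induction, a coherent nested chain of generating sets for $\mathfrak{c}_\omega$, and the injection coding of Theorem~\ref{thm:forRandQ} for transitivity; parts (1) and (3) you dispatch as the paper does. Where you diverge is the geometry of the induction, and that is exactly where your gaps sit. You recurse \emph{inside} intervals: to make an interval support $G_{\mu^+}$ you subdivide it into $\mu$ disjoint subintervals each supporting $G_\mu$, so you need every interval at every level to have large cellularity. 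The paper runs the recursion \emph{flat}: it partitions the hypothesized family of $\kappa$ disjoint intervals into $\mathfrak{c}_n$ groups of $\mathfrak{c}_{n-1}$ intervals each, and the sites carrying the rank-$\mathfrak{c}_n$ groups of the inductive hypothesis are those \emph{unions} of intervals, never subintervals of anything (this is explicit in its $\mathfrak{c}_\omega$ paragraph, where $\hat G_{\mathfrak{c}_n}$ is supported on $\bigcup C_n$). With the flat organization the only geometric input ever used is the hypothesis itself, and intervals are only ever subdivided countably, which Lemma~\ref{lem:uncountable} always permits.

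Two concrete problems follow from your organization. First, your cellularity argument does not close as written: from $\nu<\mathfrak{c}_{n-1}$ the scaffold decomposition does give $\max(\cof(\Omega),\text{coinitiality})\ge\mathfrak{c}_n$, but ``drive one of those invariants up to $\mathfrak{c}_n$ in a way the clustering forbids'' is not an argument --- nothing about the distribution of the witnessing intervals is contradicted by a large cofinality. The contradiction actually comes from density (which double transitivity forces): an initial segment of length $\nu^+$ of an increasing cofinal sequence lies inside a \emph{bounded} interval, and its consecutive gaps are $\nu^+>\nu$ nonempty disjoint subintervals there; you would need to supply this step. Second, and more seriously, your pathology recipe --- ``running the whole construction simultaneously on a coterminal family of disjoint intervals'' --- is impossible precisely in the case your worry is about. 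The hypothesis $\kappa\ge\mathfrak{c}_n$ together with the argument just sketched only yields cellularity $\ge\mathfrak{c}_{n-1}$ in bounded intervals, never $\ge\mathfrak{c}_n$: when the $\kappa$ disjoint intervals come from \emph{length} rather than local richness (a long-line-type order, e.g.\ under CH a doubly homogeneous chain of cofinality $\omega_2$ all of whose bounded intervals are isomorphic to the saturated order of size $\aleph_1$), no interval supports $G_{\lambda^+}$ at all, since the a.d.\ construction inside it would need $\mathfrak{c}_n$ disjoint subintervals. So the $G_{\lambda^+}$ you build is necessarily unique to the whole order, and pathology must be obtained as the paper's structure allows: spread the $\mathfrak{c}_n$ sites themselves cofinally (each site a coterminal or cofinal union of intervals), and note that a nontrivial word acts nontrivially on all but fewer than $\mathfrak{c}_n$ sites, hence, by regularity of $\mathfrak{c}_n$, on an unbounded set of them.
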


\begin{proof} Theorem~\ref{thm:forRandQ} proves $(1)$. For $(2)$, we first prove the above for all $n\in \N$ by induction on $n$. Let $\mathfrak{c} = \lambda \leq \kappa$ and notice that since $\lambda$ is regular then by Theorem~\ref{thm:kunen} there exists an $\ad$ family $\mathcal{A} = \{A_{\gamma} \mid \gamma \in \lambda^+\} \subset \PP(\lambda)$. Further, let $H \leq \Aut(\Omega)$ be large, $h_{\alpha}: \lambda \rightarrow A_{\alpha}$ be the function that enumerates $A_{\alpha} \in \mathcal{A}$ and $C = \{ I_\alpha \subset \Omega \mid \alpha \in \kappa\}$ be a collection of pairwise disjoint intervals. By virtue of Lemma~\ref{lem:uncountable}, given any interval $I \in C$ there exists $\hat{G}_\mathfrak{c} \leq H$ so that any element in $\hat{G}_\mathfrak{c}$ is the identity outside $I$. Take any $I_\alpha \in C_{\lambda} = \{I_\gamma \in C \mid \gamma \leq \lambda\}$ and let $f_{\alpha 0}, f_{\alpha 1}, \ldots f_{\alpha \omega} \ldots$ be the generators of a representation of $G_\mathfrak{c}$ in $H$, $G(\alpha)$, for which any element in $G(\alpha)$ is the identity outside $I_\alpha$. Given any $\beta \in \lambda^+$ define $g_{\beta}$ to be the permutation on $\Omega$ that when restricted to $I_\alpha$ is exactly $f_{\alpha h_{\beta}(\alpha)}$ and the identity for any $I_\gamma \in C\smallsetminus C_\lambda$. Again, this is possible since intervals in $\mathcal{C}$ are disjoint and $H$ is closed under disjoint patching. To this end we need only check that no word $w(g_{\alpha_1}, \ldots , g_{\alpha_n})$ on distinct $\alpha_1, \ldots , \alpha_n \in \lambda^+$ is trivial. By almost disjointness and regularity of $\lambda$ we know that $|A_{\alpha_1} \cap \ldots \cap A_{\alpha_n}| < \lambda$. Now, for each $\beta \in \lambda^+$ and $I_\alpha \in C_\lambda$ the choice of generator for $g_\beta$ from $F_\alpha$ when restricted to $I_\alpha$ is done by $h_{\beta}$ (i.e. $g_\beta \upharpoonright I_\alpha = f_{\alpha h_\beta(\alpha)}$). In other words, $g_\beta \upharpoonright I_\alpha$ is the $h_{\beta}(\alpha)^{th}$ generator of $F_\alpha$. The aforementioned tells us that for a pair $g_{\alpha_i}, g_{\alpha_j}$ their restrictions to intervals in $C_\lambda$ can be the same on at most fewer than $\lambda$-many intervals. In fact, they can agree on at most as many elements as $A_{\alpha_i}$ and $A_{\alpha_j}$ have in common. Clearly the argument also holds for any finite collection of elements from $\lambda^+$. In turn, we have a $\gamma \in \lambda$ so that $w(g_{\alpha_1}, \ldots , g_{\alpha_n}) \upharpoonright I_\gamma$ is not the identity.

The inductive step is handled in much the same manner. Let $\mathfrak{c} < \lambda = \mathfrak{c}_{n} \leq \kappa$ and notice that since successor cardinals are regular then so must be $\lambda$. Consequently, the logic behind the base case applies to the inductive step.

For the case where $\lambda = \mathfrak{c}_\omega \leq \kappa$ we will construct a nested sequence of free groups within $H$ of increasing rank. Take $\C = \{I_\alpha \mid \alpha \in \mathfrak{c}_\omega\}$ to be a collection of $\mathfrak{c}_\omega$ pairwise disjoint bounded intervals in $\Omega$. For the sake of simplicity, partition $\C$ into $\omega$ many parts of size $\mathfrak{c}_\omega$ so that each part is unbounded in $\Omega$. That is, calling each part $C_n$ ($n \in \omega$), for any $a \in \Omega$ there exist intervals $I_a, I^a \in C_n$ so that $I^a > a$ and $I_a < a$. Notice that by virtue of the previous paragraph, for any $n \in \omega$ we can construct a representation $\hat{G}_{\mathfrak{c}_n} \leq H$ of $G_{\mathfrak{c}_n}$ so that each $g \in \hat{G}_{\mathfrak{c}_n}$ has unbounded support and acts as the identity outside $\bigcup C_n$. For each $n \in \omega$ let $\{f_{n,j}\}_{j \in \mathfrak{c}_n}$ denote a generating set of $\hat{G}_{\mathfrak{c}_n}$. Of course, each $f_{n,j}$ has unbounded support. Next, for all $j \in \mathfrak{c}$ let $g_{0,j} \in H$ be the one for which $g_{0,j}(x) = f_{n,j}(x)$ when $ x \in \bigcup C_n$ and the identity otherwise. In general, we want for all $k \in \omega$

\begin{equation*}
g_{k,j}(x) =
\begin{cases}

f_{n,j}(x) & \text{if } x \in \bigcup C_n \mbox{ and } j \in \mathfrak{c}_n \\
x & \text{otherwise.}\\
\end{cases}
\end{equation*}\\

Consequently, for each $k \in \omega$, $\{g_{k, j}\}_{j \in \mathfrak{c}_k}$ generates a pathological copy of $G_{\mathfrak{c}_k}$ in $H$ and for $m<n$ we have that $\{g_{m, j}\}_{j \in \mathfrak{c}_m} \subset \{g_{n, j}\}_{j \in \mathfrak{c}_n}$. To this end it should be clear that $\bigcup\limits_{n \in \omega} \{g_{n, j}\}_{j \in \mathfrak{c}_n}$ generates a pathological copy of $G_{\mathfrak{c}_\omega}$ in $H$.

Next, if $|\Omega| = \lambda$ it is a simple matter to set up a choice function between generators of $G_\lambda$ and ordered pairs within $\Omega$ in much the same fashion as with Theorem~\ref{thm:forRandQ}.

Lastly, for $(3)$ since GCH implies that $\mathfrak{c}$ is regular then $(2) \Rightarrow (3)$.

\end{proof}

\vspace{.1 in}

 For some $n \in \N$, let $L(\aleph_n) = \aleph_n \times \aleph_n$ ordered lexicographically. Assuming GCH and by Theorem~\ref{thm:main} there exists a representation of the free group of rank $\aleph_n^+ = 2^{\aleph_n} = |\Aut(L(\aleph_n))|$ within any large subgroup of $\Aut(L(\aleph_n))$. Thus, under GCH, $|H| = |\Aut(L(\aleph_n))|$ for any large $H$.

Nested sequences of free groups are essential for the construction of $G_{\aleph_\omega}$ in Theorem~\ref{thm:main}. Take for instance the following theorem where $\bigoplus$ denotes a restricted direct product.

\begin{theorem} \label{thm:freeprods} Let $G = \bigoplus_{\beta \in \kappa} H_{\beta}$ for which $|H_{\alpha}| < sup(|H_{\beta}|)_{\beta \in \kappa} = \lambda$, for any $\alpha \in \kappa$. Then for any freely generated $H \leq G$ we have that rank$(H) < \lambda$.

\end{theorem}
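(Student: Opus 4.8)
The plan is to avoid any purely cardinal-arithmetic count—which, as I explain at the end, cannot succeed—and instead to exploit freeness through the rigidity of centralizers. The starting observation is that multiplication in $G=\bigoplus_{\beta\in\kappa}H_\beta$ is coordinatewise, so any two elements $a,b$ with $\supp(a)\cap\supp(b)=\emptyset$ commute: at each $\beta$ one of the two coordinates is trivial. Thus \emph{disjoint support forces commutation}, and the idea is to feed this into the fact that in a free group commutation is extremely constrained.

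Concretely, I would first prove the claim that every family of pairwise-disjoint-support nontrivial elements of $H$ is \emph{finite}. Let $\{d_m\}$ be such a family. By the observation they pairwise commute, and since $H$ is free the centralizer $C_H(d_1)$ is infinite cyclic, say $\langle c\rangle$; as each $d_m$ commutes with $d_1$ we get $d_m\in C_H(d_1)=\langle c\rangle$ for every $m$. Hence each $d_m=c^{n_m}$ satisfies $\supp(d_m)\subseteq\supp(c)$, and these supports are nonempty and pairwise disjoint subsets of the \emph{finite} set $\supp(c)$; there can be at most $|\supp(c)|$ of them.

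With the claim in hand I would invoke Zorn's Lemma to choose a \emph{maximal} pairwise-disjoint-support family $\{d_1,\dots,d_k\}$ of nontrivial elements of $H$; by the claim it is finite, so $D:=\bigcup_{m\le k}\supp(d_m)$ is a finite subset of $\kappa$. Maximality forces every nontrivial $h\in H$ to meet $D$ in its support, for otherwise $\{d_1,\dots,d_k,h\}$ would be a strictly larger such family. Equivalently, the projection $\rho_D\colon G\to\bigoplus_{\beta\in D}H_\beta$ is injective on $H$, whence $|H|\le\prod_{\beta\in D}|H_\beta|$. Since $\sup_\beta|H_\beta|=\lambda$ while each $|H_\beta|<\lambda$, the cardinal $\lambda$ cannot be a successor (a supremum of cardinals all $\le\mu$ is $\le\mu$), so $\lambda$ is a limit cardinal and this finite product of cardinals below $\lambda$ is again below $\lambda$. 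Therefore $|H|<\lambda$, and \emph{a fortiori} $\operatorname{rank}(H)\le|H|<\lambda$.

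I expect the main obstacle to be conceptual rather than computational: the reflex is to bound $\operatorname{rank}(H)$ by $|G|$, but $|G|$ can equal $\lambda$ exactly (already when $\kappa\le\lambda$), so every naive counting argument stalls at the \emph{non-strict} bound $\operatorname{rank}(H)\le\lambda$. The genuine content is extracting the strict inequality, and the device above does this by using freeness to collapse the whole problem onto a single finite subproduct $\bigoplus_{\beta\in D}H_\beta$, after which the limit-cardinal property of $\lambda$ finishes it—notably without any appeal to regularity of $\mathfrak{c}$ or to CH. The one point that wants care is the standard structural fact I am relying on, namely that centralizers of nontrivial elements of a free group are cyclic (equivalently, that commutation is transitive on $H\setminus\{e\}$), since this is exactly what upgrades pairwise commutation to membership in one common cyclic subgroup.
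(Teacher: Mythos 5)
Your proof is correct, and it takes a genuinely different route from the paper's. The paper argues by contradiction on a free generating set $F_H$ of size $\lambda$: it sends each generator to its finite support, observes that any two such supports must meet (disjoint supports force commutation, impossible for distinct members of a free basis), and then runs an iterated pigeonhole over a fixed support $A$ to extract generators agreeing on every coordinate of $A$; a commutator of two of these is supported off $A$, hence commutes with a generator supported inside $A$, which is impossible in a free group. You instead apply the disjointness-implies-commutation observation to \emph{all} of $H$ rather than just its basis, upgrading it via the centralizer theorem (centralizers of nontrivial elements of a free group are infinite cyclic) to the statement that any pairwise-disjoint-support family in $H \setminus \{e\}$ lies in a single cyclic subgroup $\langle c \rangle$ and so has at most $|\supp(c)|$ members; a maximal such family then yields a finite $D \subset \kappa$ meeting the support of every nontrivial element of $H$, so projection onto $\bigoplus_{\beta \in D} H_\beta$ embeds $H$ and $|H| \le \prod_{\beta \in D}|H_\beta| < \lambda$, since $\lambda$ is forced to be a limit cardinal. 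The trade-off is instructive: the paper needs only the most elementary free-group facts (no two basis elements commute, and a basis element cannot commute with a nontrivial commutator of two others), but pays with delicate transfinite combinatorics --- indeed its pigeonhole steps, which demand fibers of full size $\lambda$, tacitly presuppose $\lambda$ regular, while the hypotheses make $\lambda$ a limit cardinal and it is singular in the intended application (Corollary \ref{thm:res1}, $\lambda = \aleph_\omega$); that step is patchable (a single pigeonhole into $\prod_{a \in A}|H_a| < \lambda$ classes already produces two suitable generators), but your argument sidesteps the issue entirely. In exchange for invoking a deeper structural fact about free groups, you also obtain a strictly stronger conclusion: not merely $\operatorname{rank}(H) < \lambda$ but $|H| < \lambda$, with $H$ embedded in a finite subproduct.
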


\begin{proof} Assume that there exists $H \leq G$ freely generated by $F_H$ with $|F_H| = \lambda$. In order to avoid trivialities we assume $\lambda > \aleph_0$. Let us begin by defining $i : F_H \rightarrow [\kappa]^{< \omega}$ where $a \in i (g)$ iff $\pi_{a}(g) \not = e$ (i.e. the $a^{th}$ coordinate of $g$ is not the identity). Take any $A \in ran(i)$  and notice that for any $B \in ran(i)$, $A \cap B \not = \emptyset$. Otherwise all elements from $i^{-1}(A)$ commute with any element from $i^{-1}(B)$. Define, for all $a \in \kappa$ and $h \in H_a$, $a(h) = \{ g \in F_H \mid \pi_a(g) = h\}$ and given $B \in ran(i)$ let $B_a(h) = i^{-1}(B) \cap a(h)$. Back to $A$, we have that since $A \in [\kappa]^{< \omega}$ then by the Pigeonhole Principle there exists $a_1 \in A$ and $h_1 \in H_{a_1}$ for which

$$ \left| \bigcup_{B \in ran(i)} B_{a_1}(h_1) \right| = \lambda$$

where we let $a_1$ be the smallest element of $A$ for which the above is true. Next, denote $\B_1 = \bigcup\limits_{B \in ran(i)} B_{a_1}(h_1)$ and notice that there must exist $\lambda$  elements $f \in \B_1$ so that $i(f) \not \subset A$ (since the largest element in $\{|H_a|\}_{a \in A}$ is smaller that $\lambda$). Take any pair $g_1, g_2 \in \B_1$ and observe that if $i(g_1) \cap A = i(g_2) \cap A = \{a_1\}$ then $[g_1,g_2] := g_1g_2g_1^{-1}g_2^{-1}$ commutes with all elements from $i^{-1}(A)$. Thus, for all (but at most one) $g \in \B_1$, $i(g) \cap A - \{a_1\} \not = \emptyset$. Let $A_1 = A - \{a_1\}$. Since $A_1 \in [\kappa]^{< \omega}$ then we can find an $a_2 \in A_1$ and $h_2 \in H_{a_2}$ so that
$$ \left| \bigcup_{B \in ran(i)} (B_{a_1}(h_1) \cap B_{a_2}(h_2))\right| = \lambda.$$

Again, we let $a_2$ be the smallest element in $A_1$ that satisfies the above equation. We are now in a very similar situation to that encountered at the beginning of the proof. In turn, we can run the above argument until we exhaust all of $A$, at which point we have a collection of elements from $H$ whose commutator commute with any element from $A$. Indeed, let us assume that $A=\{a_1, \ldots, a_k\}$ and that
$$ \left| \bigcup_{B \in ran(i)}\left( \bigcap_{j\leq k-1} B_{a_j}(h_j)\right) \right| = \lambda \mbox{ for } h_j \in H_{a_j}.$$

 Let $\B_k =  \bigcup\limits_{B \in ran(i)} \left( \bigcap\limits_{j\leq k} B_{a_j}(h_j)\right)$ and again take any pair $g_1, g_2 \in \B_k$ for which $i(g_1)$ and $i(g_2)$ are not contained in $A$. If $a_k \not \in i(g_1),i(g_2)$ then $[g_1,g_2]$ commutes with everything in $A$. In turn, we must have that for all (but at most one) $g \in \B_k$, $a_k \in i(g)$. Hence, for the last element in $A$ we have
$$ \left| \bigcup_{B \in ran(i)} \left(\bigcap_{j\leq k} B_{a_j}(h_j) \right)\right| = \lambda \mbox{ for }  h_{j} \in H_{a_{j}}.$$

Lastly, take any pair $g_1,g_2 \in \bigcup\limits_{B \in ran(i)} \left(\bigcap_{j\leq k} B_{a_j}(h_j)\right)$ for which $i(g_1),i(g_2) \not \subseteq A$ and notice that $[f,[g_1,g_2]] = e$ for any $f \in i^{-1}(A)$.

\end{proof}

\begin{corollary} \label{thm:res1} Let $G = \bigoplus_{i \in \omega} F_i$ so that each $F_i$ is the free group of rank $ \aleph_i$. If $H \leq G$ is a free group then $rank( H) < \aleph_{\omega}$.

\end{corollary}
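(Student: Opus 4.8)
The plan is to obtain this as an immediate specialization of Theorem~\ref{thm:freeprods}, so the work consists entirely of checking that the hypotheses of that theorem apply to the particular restricted product $G = \bigoplus_{i \in \omega} F_i$. First I would match the data: here the index set is $\kappa = \omega$ and the summands are $H_i = F_i$, the free group of rank $\aleph_i$. The one arithmetic fact worth recording explicitly is that a free group of infinite rank has cardinality equal to its rank, so $|F_i| = \max(\aleph_0,\aleph_i) = \aleph_i$ for every $i \in \omega$.

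Next I would compute the relevant supremum and verify the strict-bound hypothesis. Since $(\aleph_i)_{i \in \omega}$ is a strictly increasing sequence of cardinals, its supremum is $\lambda = \sup_{i \in \omega}|H_i| = \sup_{i \in \omega}\aleph_i = \aleph_\omega$. For each fixed $\alpha \in \omega$ the index $\alpha$ is finite, so $|H_\alpha| = \aleph_\alpha < \aleph_\omega = \lambda$; this is exactly the requirement $|H_\alpha| < \sup(|H_\beta|)_{\beta \in \kappa} = \lambda$ for all $\alpha \in \kappa$ demanded by Theorem~\ref{thm:freeprods}.

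With the hypotheses confirmed, I would invoke Theorem~\ref{thm:freeprods} to conclude that any freely generated $H \leq G$ has $\mathrm{rank}(H) < \lambda = \aleph_\omega$. Because the corollary assumes $H \leq G$ is a free group, $H$ admits a free generating set and is in particular freely generated, so the desired bound $\mathrm{rank}(H) < \aleph_\omega$ follows at once.

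I do not expect a genuine obstacle here, since the statement is a direct corollary; the only two points that deserve a moment's care are the identification $|F_{\aleph_i}| = \aleph_i$, which guarantees that each summand's cardinality is precisely $\aleph_i$ and not larger, and the observation that $\aleph_\omega$, being the limit of the $\aleph_i$, lies strictly above every individual $\aleph_i$ (a consequence of $\omega$ being a limit ordinal with no largest finite stage). Both are routine cardinal-arithmetic remarks, so the entire content of the corollary reduces to checking that Theorem~\ref{thm:freeprods} is applicable.
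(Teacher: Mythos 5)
Your proposal is correct and is exactly the argument the paper intends: the corollary is stated without proof precisely because it is the specialization of Theorem~\ref{thm:freeprods} to $\kappa=\omega$, $H_i = F_i$, using $|F_i|=\aleph_i$ and $\sup_{i\in\omega}\aleph_i = \aleph_\omega > \aleph_i$ for each finite $i$. Your two cardinal-arithmetic checks (infinite-rank free groups have cardinality equal to their rank, and $\aleph_\omega$ strictly exceeds every $\aleph_i$) are the only content needed.
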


\section{Conclusion}
The set-theoretic restriction on Theorem~\ref{thm:main} (i.e. singular cardinals) highlights a natural bound in the method employed within this paper and it is unknown to the present author whether or not it can be extended any further. As far as transitivity goes, the minimum requirement we must impose on a permutation group is double transitivity on at least one interval of the linear order. Also, piecewise patching and disjoint patching are very strong
properties. An interesting result would involve free group constructions involving weaker versions of the above properties.
As for free group representations, a similar question can be asked in terms of $l$-groups. That is, when do permutation groups of linear orders admit representations of free $l$-groups? It is worth noting that the Ping-Pong Lemma does not guarantee a lattice order on the free group.

\newpage
\bibliographystyle{amsplain}
\bibliography{mybib}

\providecommand{\bysame}{\leavevmode\hbox to3em{\hrulefill}\thinspace}
\providecommand{\MR}{\relax\ifhmode\unskip\space\fi MR }
\providecommand{\MRhref}[2]{%
  \href{http://www.ams.org/mathscinet-getitem?mr=#1}{#2}
}
\providecommand{\href}[2]{#2}
\begin{thebibliography}{10}

\bibitem{MR1363412}
Curtis~D. Bennett, \emph{Explicit free subgroups of {${\rm Aut}({\bf
  R},\leq)$}}, Proc. Amer. Math. Soc. \textbf{125} (1997), no.~5, 1305--1308.
  \MR{1363412 (97g:06020)}

\bibitem{MR2796250}
V.~V. Bludov, M.~Droste, and A.~M.~W. Glass, \emph{Automorphism groups of
  totally ordered sets: a retrospective survey}, Math. Slovaca \textbf{61}
  (2011), no.~3, 373--388. \MR{2796250}

\bibitem{MR2796249}
V.~V. Bludov and A.~M.~W. Glass, \emph{Right orders and amalgamation for
  lattice-ordered groups}, Math. Slovaca \textbf{61} (2011), no.~3, 355--372.
  \MR{2796249}

\bibitem{Cameron99oligomorphicpermutation}
Peter~J. Cameron, \emph{Oligomorphic permutation groups}, London Mathematical
  Society Student Texts \textbf{45} (1999).

\bibitem{MR0091280}
P.~M. Cohn, \emph{Groups of order automorphisms of ordered sets}, Mathematika
  \textbf{4} (1957), 41--50. \MR{0091280 (19,940e)}

\bibitem{MR0270992}
Paul Conrad, \emph{Free lattice-ordered groups}, J. Algebra \textbf{16} (1970),
  191--203. \MR{0270992 (42 \#5875)}

\bibitem{MR988099}
M.~Droste, W.~C. Holland, and H.~D. Macpherson, \emph{Automorphism groups of
  infinite semilinear orders. {I}, {II}}, Proc. London Math. Soc. (3)
  \textbf{58} (1989), no.~3, 454--478, 479--494. \MR{988099 (90b:20006)}

\bibitem{MR0409309}
A.~M.~W. Glass, \emph{{$l$}-simple lattice-ordered groups}, Proc. Edinburgh
  Math. Soc. (2) \textbf{19} (1974/75), no.~2, 133--138. \MR{0409309 (53
  \#13069)}

\bibitem{MR0158009}
Charles Holland, \emph{The lattice-ordered groups of automorphisms of an
  ordered set}, Michigan Math. J. \textbf{10} (1963), 399--408. \MR{0158009 (28
  \#1237)}

\bibitem{MR756630}
Kenneth Kunen, \emph{Set theory}, Studies in Logic and the Foundations of
  Mathematics, vol. 102, North-Holland Publishing Co., Amsterdam, 1983, An
  introduction to independence proofs, Reprint of the 1980 original. \MR{756630
  (85e:03003)}

\bibitem{MR787955}
Stephen~H. McCleary, \emph{Free lattice-ordered groups represented as
  {$o$}-{$2$} transitive {$l$}-permutation groups}, Trans. Amer. Math. Soc.
  \textbf{290} (1985), no.~1, 69--79. \MR{787955 (86m:06034a)}

\bibitem{pingpong}
Uri Weiss, \emph{Ping-pong lemma (version 5)}, PlanetMath.org.

\bibitem{MR969681}
Samuel White, \emph{The group generated by {$x\mapsto x+1$} and {$x\mapsto
  x^p$} is free}, J. Algebra \textbf{118} (1988), no.~2, 408--422. \MR{969681
  (90a:12014)}

\end{thebibliography}
\end{document}